\numberwithin{equation}{subsection}
\newtheorem{thm}{Theorem}[section]
\newtheorem*{thm*}{Theorem}
\newtheorem{lem}[thm]{Lemma}
\theoremstyle{definition}
\newtheorem{definition}[thm]{Definition}
\newtheorem{rem}[thm]{Remark}
\theoremstyle{remark}
\newcommand{\R}{\mathbb{R}}
\newcommand{\C}{\mathbb{C}}
\newcommand{\V}{\mathbb{V}}
\newcommand{\HH}{\mathbb{H}}
\newcommand{\PP}{\mathbb{P}}
\newcommand{\N}{\mathcal{N}}
\newcommand{\F}{\mathcal{F}}
\newcommand{\B}{\mathcal{B}}
\newcommand{\LK}{\mathcal{K}}
\newcommand{\im}{\mathbf{i}}
\newcommand{\dd}{\mathrm{d}}
\newcommand{\K}{\mathtt{K}}
\newcommand{\U}{\mathcal{U}}
\begin{document}

\title{Unit Tangent Bundles, CR Leaf Spaces, and Hypercomplex Structures}
\author{Curtis Porter} 
\address{Department of Mathematics, Duke University, 120 Science Dr. Durham, NC 27710}
\email{cwp19@math.duke.edu}

\subjclass[2010]{32V05, 53C10, 53C12, 53C15}
\begin{abstract}
Unit tangent bundles $UM$ of semi-Riemannian manifolds $M$ are shown to be examples of dynamical Legendrian contact structures, which were defined in recent work \cite{SykesZelenko} of Sykes-Zelenko to generalize leaf spaces of 2-nondegenerate CR manifolds. In doing so, Sykes-Zelenko extended the classification in Porter-Zelenko \cite{PZ} of regular, 2-nondegenerate CR structures to those that can be recovered from their leaf space. The present paper treats dynamical Legendrian contact structures associated with 2-nondegenerate CR structures which were called \emph{strongly regular} in Porter-Zelenko, named \emph{L-contact structures}. Closely related to Lie-contact structures, L-contact manifolds have homogeneous models given by isotropic Grassmannians of complex 2-planes whose algebra of infinitesimal symmetries is one of $\mathfrak{so}(p+2,q+2)$ or $\mathfrak{so}^*(2p+4)$ for $p\geq1$, $q\geq0$. Each 2-plane in the homogeneous model is a split-quaternionic or quaternionic line, respectively, and more general L-contact structures arise on contact manifolds with hypercomplex structures, unit tangent bundles being a prime example. The Ricci curvature tensor of $M$ is used to define the \emph{Ricci-shifted} L-contact structure on $UM$, whose Nijenhuis tensor vanishes when $M$ is conformally flat. In the language of Sykes-Zelenko (for $M$ analytic), such $UM$ is the leaf space of a 2-nondegenerate CR manifold which is \emph{recoverable} from $UM$, providing a new source of examples of 2-nondegenerate CR structures.
\end{abstract}

\maketitle
\tableofcontents


\section{Introduction}


A CR manifold $\U$ whose Levi form $\mathcal{L}$ has $k$-dimensional kernel is foliated by complex submanifolds of complex dimension $k$, hence has a local product structure $\U\approx U\times\C^k$, where $U$ is the leaf space of the Levi foliation. For concreteness, we can take $\U$ to be a regular level set of a smooth, non-constant function $\rho:\C^{n+k+1}\to\R$ so that $T\U=\ker\dd\rho$ contains a corank-1 distribution given by the kernel $D=\ker\partial\rho\subset T\U$ of the holomorphic differential. The CR structure of $\U$ is the splitting $\C D=H\oplus\overline{H}$ of the complexification of $D$, where $H,\overline{H}$ are the intersections with $\C T\U$ of the holomorphic and anti-holomorphic bundles of the ambient space $\C^{n+k+1}$. The Levi kernel $\LK\subset H$ consists of null directions for $\mathcal{L}=\partial\overline{\partial}\rho$, so that $\LK\oplus\overline{\LK}$ is the complexified tangent bundle of the leaves of the Levi foliation (we always assume $\LK$ has constant rank). 

The Levi foliation of $\U$ introduces an equivalence relation on $\U$ -- two points being equivalent if they lie in the same leaf -- and the quotient map $Q:\U\to U$ sends each point to its equivalence class in the $(2n+1)$-dimensional leaf space (since our considerations are local, we can assume without further comment that the leaf space is a manifold and the quotient map is smooth). A local trivialization $\U\approx U\times\C^k$ adapted to the Levi foliation is a \emph{straightening} (\cite[\S5.2]{ChirkaCR}, \cite{FreemanStraight}) if it preserves the CR structure of $\U$; equivalently, $\U$ is \emph{straightenable} if $Q_*H\subset\C TU$ is a well-defined CR structure on the leaf space. In this article we consider the opposite extreme, where $\U$ is \emph{2-nondegenerate} (\S\ref{2nonsec}), so that $U$ does not inherit a canonical CR structure from $\U$, but rather a $k$-(complex)-dimensional family 
\begin{align}\label{QHinU}
\{Q_*H_{\widetilde{u}}\subset\C T_uU\ |\ \widetilde{u}\in Q(\widetilde{u})=u\}
\end{align}
of \emph{almost}-CR structures at each $u\in U$. In this sense, a 2-nondegenerate CR manifold determines a highly nontrivial fibration over its leaf space. 

Classification of 2-nondegenerate CR manifolds has been an active research program for the past decade, motivating new developments in the method of equivalence (see \cite{PZ}, \cite{SykesZelenko}, and references therein). In general, the method of equivalence classifies manifolds carrying some geometric structure by realizing them as curved versions of a homogeneous model in the spirit of E. Cartan's \emph{espaces g\'{e}n\'{e}raliz\'{e}s} \cite[Preface]{Sharpe}. Successful application of the method to 2-nondegenerate CR manifolds of arbitrary (odd) dimension in \cite{PZ} required a \emph{regularity} assumption on the fibers of $Q:\U\to U$. Regularity is a stringent requirement, as it is generically absent in the moduli space of all possible 2-nondegenerate CR structures, but recent work \cite{SykesZelenko} of Sykes-Zelenko generalizing beyond the regular setting showed that (in the pseudo-convex case, and for arbitrary signature in dimensions 7 and 9) it is also generically true that non-regular symbols do not admit homogeneous models.   

A key strategy of Sykes-Zelenko is to shift focus from the 2-nondegenerate CR manifold $\U$ to its leaf space $U$, as follows. $U$ has a contact distribution $\Delta=Q_*D$, and $\mathcal{L}$ descends to a symplectic form $\mathfrak{L}:\C\Delta\times\C\Delta\to\C(TU/\Delta)$ so that \eqref{QHinU} and its complex conjugate $Q_*\overline{H}$ define two $k$-dimensional complex submanifolds of the Lagrangian Grassmannian bundle $\mathsf{LaGr}(\C\Delta)\to U$ of $\mathfrak{L}$-Lagrangian (i.e., \emph{almost-CR}) splittings of $\C\Delta$. A \emph{dynamical Legendrian contact structure} (\cite[Def. 2.3]{SykesZelenko}) on a contact manifold $(U,\Delta)$ is exactly that: a pair of submanifolds of $\mathsf{LaGr}(\C\Delta)$ that are appropriately related by complex conjugation. Working in the analytic category\footnote{The geometric PDE underlying dynamical Legendrian contact structures is determined at finite jet order, so analyticity is only employed in \cite{SykesZelenko} to analytically continue local coordinate charts on $U$, because the complex-conjugation map on $\C U$ simplifies several constructions on bundles over $U$.}, Sykes-Zelenko pass to the complexification of $U$ in order to show that a 2-nondegenerate CR structure is \emph{recoverable} from the dynamical Legendrian contact structure on its leaf space, at least under certain hypotheses (\cite[Prop. 2.6]{SykesZelenko}) on Lie derivatives along $\LK$ which will always be satisfied for our purposes. Though we work in the smooth (real) category, and we are only interested in \emph{regular} 2-nondegenerate CR structures, this article aims to illustrate the scope of dynamical Legendrian contact geometry by presenting a large family of examples: unit tangent bundles of semi-Riemannian manifolds. 

Tangent sphere bundles of Riemannian manifolds are rich with structure. They have been studied extensively as examples of Riemannian manifolds in their own right \cite{UTBsurvey} (or, extrinsically, as Riemannian hypersurfaces \cite{UTextrinsic}) and in their capacity as contact-metric \cite{UTcontact}, Lie contact \cite{LC}, and CR manifolds \cite{Tanno, BarDrag}. For a semi-Riemannian manifold $(M,g)$ (where $g$ has at least one positive eigenvalue), the unit tangent bundle of $M$ is 
\begin{align}\label{UMdef}
UM=\{u\in TM\ |\ g(u,u)=1\}.
\end{align}
The Levi-Civita connection of $g$ determines a splitting $TTM=\overrightarrow{TM}\oplus T^\uparrow M$ into horizontal and vertical bundles canonically isomorphic to $TM$, and $g$ is lifted to the Sasaki metric (\cite{Sasaki}) $\hat{g}$ on $TM$ evaluating on $\overrightarrow{TM}$ and its orthogonal complement $T^\uparrow M$ exactly as $g$ evaluates on $TM$. Just as one identifies $T_u S^n\cong u^\bot\subset\R^{n+1}$, the vertical part of $T_u(U_xM)$ is the vertical lift of $u^\bot=\ker g(u,\cdot)\subset T_xM$. On all of $T_u(UM)$, $\theta|_u=\hat{g}(\overrightarrow{u},\cdot)\in\Omega^1(UM)$ is a contact 1-form, where $\overrightarrow{u}\in\overrightarrow{TM}$ is the horizontal lift of $u\in UM$. Thus, we have a contact distribution $\Delta\subset TUM$ and symplectic form $\mathfrak{L}:\C\Delta\times\C\Delta\to\C$,
\begin{align*}
&\Delta=\ker\theta,
&\mathfrak{L}=-\im\dd\theta.
\end{align*}

It remains to appoint a submanifold of $\mathsf{LaGr}(\C\Delta_u)$ for each $u\in UM$, and to this end we return to 2-nondegenerate CR structures for inspiration. Among regular structures classified in \cite{PZ}, \emph{strongly regular} 2-nondegenerate manifolds (with $k=\mathrm{rank}_\C\LK=1$) are modeled on homogeneous spaces whose Lie algebra $\mathfrak{g}$ of infinitesimal symmetries is a real form of $\mathfrak{so}(n+4,\C)$. Specifically, if $\mathcal{L}$ has signature $(p,q)$ for $p+q=n$, $\mathfrak{g}$ is one of $\mathfrak{so}(p+2,q+2)$ or $\mathfrak{so}^*(2p+4)$ (the latter is only possible if $q=p$). An essential observation for the present work is that the dynamical Legendrian contact structure on the leaf space $U$ of a strongly regular 2-nondegenerate CR structure $\U$ is generated by a \emph{hyper-CR} structure (\S\ref{AHCsec}) on $U$; i.e., a pair of endomorphisms $J,K:\Delta\to\Delta$ satisfying $J^2=-\mathbbm{1}$, $J\circ K=-K\circ J$, and $K^2=\varepsilon\mathbbm{1}$ (where $\mathbbm{1}$ is the identity and $\varepsilon=\pm1$). This is the ``CR version" of a hypercomplex structure, and the case $\varepsilon=1$ is called \emph{split-quaternionic} while $\varepsilon=-1$ (which requires $\tfrac{1}{2}\mathrm{rank}\Delta$ to be even) is \emph{quaternionic}.  

Our primary object of study is therefore a contact manifold $(U,\Delta)$ with a dynamical Legendrian contact structure given by a complex curve in the bundle $\mathsf{LaGr}(\C\Delta)$ which is generated by a hyper-CR structure on $J,K:\Delta\to\Delta$; in short, we call $U$ an \emph{L-contact\footnote{Here ``L" could plausibly refer to Lagrange, Legendre, Levi, Lie,  or just \emph{leaf}, so the reader can choose their favorite.} manifold}. For $U=UM$, where the signature of the semi-Riemannian metric $g$ is $(p+1,q)$, the \emph{standard} L-contact structure is generated by the split-quaternionic structure 
\begin{align*}
&J,K:TTM\to TTM,
&J=\begin{bmatrix}0&-\mathbbm{1}\\\mathbbm{1}&0\end{bmatrix}, 
&&K=\begin{bmatrix}0&\mathbbm{1}\\\mathbbm{1}&0\end{bmatrix}
&&\text{on}
&&TTM=\begin{array}{c}\overrightarrow{TM}\\\oplus \\T^\uparrow M\end{array}.
\end{align*}
An alternative L-contact structure is given by ``shifting" the standard one by the Ricci curvature tensor of $M$ (Definition \ref{RicSQ} in \S\ref{UTBsec}), which leads to the article's main result,

\begin{thm*}(\ref{mainthm}, \S\ref{ONFBsec})
In order for the Ricci-shifted L-contact structure of $UM$ to be the leaf space of a 2-nondegenerate CR manifold, it is sufficient that $M$ is conformally flat and real-analytic. 
\end{thm*}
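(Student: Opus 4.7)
The plan is to combine two ingredients: (i) vanishing of the Nijenhuis tensor of the Ricci-shifted hyper-CR structure when $M$ is conformally flat, and (ii) the recoverability criterion of Sykes-Zelenko, which in the analytic category produces a 2-nondegenerate CR manifold from a suitable dynamical Legendrian contact structure on its leaf space \cite[Prop.~2.6]{SykesZelenko}.

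First, I would compute the Nijenhuis tensor of the Ricci-shifted hyper-CR endomorphisms $J,K:\Delta\to\Delta$ on $UM$. In a frame adapted to the Sasaki splitting $TTM=\overrightarrow{TM}\oplus T^\uparrow M$, brackets of horizontal lifts satisfy $[\overrightarrow{X},\overrightarrow{Y}]|_u=\overrightarrow{[X,Y]}|_u-(R(X,Y)u)^\uparrow$, so Nijenhuis-type expressions for the \emph{standard} $(J,K)$ produce obstructions governed by the full Riemann tensor of $M$. The Ricci-shift of Definition~\ref{RicSQ} is crafted precisely so that, when these expressions are reassembled for the modified $(J,K)$, the Ricci trace of the curvature cancels, and only Weyl-tensor contributions remain. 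Since $M$ is conformally flat, its Weyl tensor vanishes identically, and hence the Nijenhuis tensor of the Ricci-shifted L-contact structure is zero.

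Second, with Nijenhuis vanishing, the complex curve in $\mathsf{LaGr}(\C\Delta)$ generated by $(J,K)$, together with its complex conjugate, defines a dynamical Legendrian contact structure on $UM$ meeting the integrability hypothesis of \cite[Prop.~2.6]{SykesZelenko}. The Lie-derivative conditions along $\LK$ demanded there are automatic for the strongly regular L-contact symbol modeled on the isotropic Grassmannian of (split-)quaternionic lines, since the hyper-CR pair supplies these derivatives canonically. Real-analyticity of $(M,g)$ propagates to $UM$ and to every tensor built from $g$, enabling the analytic-continuation step in Sykes-Zelenko's reconstruction of the total CR manifold $\U$ whose Levi quotient returns $UM$.

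The main obstacle is the curvature computation in step one: verifying that the prescribed Ricci-shift cancels exactly the Ricci part of the Riemann tensor in every bi-component of the Nijenhuis expression, so that only Weyl contributions survive on each of the $\overrightarrow{TM}\otimes\overrightarrow{TM}$, $\overrightarrow{TM}\otimes T^\uparrow M$, and $T^\uparrow M\otimes T^\uparrow M$ pieces of $\Delta\otimes\Delta$. Bookkeeping is complicated by the unit-norm constraint $g(u,u)=1$, which restricts vertical directions to $u^\bot\subset T_xM$ and forces one to project onto $\Delta$ after applying $J$ and $K$, generating additional terms involving $\theta$ that must be tracked through each bracket. Once this algebraic identity is in hand, the remainder of the argument follows cleanly from the Sykes-Zelenko machinery together with real-analyticity.
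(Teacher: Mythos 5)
Your overall architecture coincides with the paper's: identify the integrability obstruction of the Ricci-shifted structure with the Weyl tensor, kill it by conformal flatness, and invoke Sykes-Zelenko recoverability with real-analyticity (the paper cites \cite[Cor. 2.8]{SykesZelenko}, the split-quaternionic origin of the structure guaranteeing the hypotheses on Lie derivatives along $\LK$). The computational vehicle differs: the paper never computes Sasaki-lift brackets directly, but instead fibers the orthonormal frame bundle $\F$ over $UM$, realizes the structure equations \eqref{splitLGSE2} there, and reads off the torsion coefficients \eqref{RicSQSEtorsion}. That route is not merely a stylistic choice, as explained below.

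The pivotal identity in your step one is overstated and, as formulated, false. You propose to verify that the Ricci shift cancels the Ricci part of the curvature ``in every bi-component,'' so that only Weyl contributions survive throughout. The paper's computation shows otherwise: of the three torsion types in \eqref{RicSQSEtorsion}, $O^i_{kl}$ and $P^i_{kl}$ remain multiples of the full Riemann components $R^i_{0kl}$ even after the Ricci shift, and the remark following \eqref{RicWeyl} notes explicitly that the attempt to absorb the Ricci parts of $O$ and/or $P$ into $\varsigma$ fails to maintain the normalization of $\dd\eta^0$. Only the $\overline{\eta}\wedge\overline{\eta}$ coefficient $Q^i_{kl}=-\tfrac{\im}{4}(z_+)^3C^i_{0kl}$ becomes Weyl, and it is precisely this component that represents the Nijenhuis tensor of the recovered CR manifold $\U$; the survival of Riemann curvature in $O$ and $P$ is harmless for the theorem (it obstructs flatness of the L-contact structure, not CR integrability of $\U$). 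So if you pursue full cancellation you will get stuck; you must isolate the $\overline{\Lambda}\wedge\overline{\Lambda}\to\Lambda$ piece alone, and note that since $\Lambda_u=\C\{\overrightarrow{Y}-\im Y^\uparrow\}$ mixes horizontal and vertical directions, your proposed horizontal/vertical bi-grading of $\Delta\otimes\Delta$ is not aligned with the relevant $\Lambda/\overline{\Lambda}$ bi-degree splitting that separates $O$, $P$, and $Q$. A second, smaller omission: CR integrability of $\U$ requires the Nijenhuis-type torsion to vanish at \emph{every} splitting $(\overline{\Lambda}_A,\Lambda_A)\in\mathbb{L}_u$, i.e., for all values of the fiber coordinate $a$ of \eqref{Adiag}, not merely at the section determined by your fixed pair $(J,K)$; this is automatic here because $Q$ is $(z_+)^3$ times Weyl, but your fixed-$(J,K)$ bracket computation sees only one point of each fiber, whereas the paper's frame-bundle computation carries the $a$-dependence throughout.
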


\noindent L-contact structures of the quaternionic variety are available on $UM$ when the signature of $g$ is $(p+1,p)$. These are discussed in \S\ref{QLUM}. 

Homogeneous models of strongly regular 2-nondegenerate CR manifolds and their leaf spaces are presented in \S\ref{homogmodsec}, emphasizing the algebraic consequences of a (split) quaternionic structure. This offers a casual encounter with 2-nondegenerate CR geometry before the formal definitions of \S\ref{Lconsec}. Leaf spaces of 2-nondegenerate CR manifolds motivate the definition of L-contact manifolds in \S\ref{2nonsec}. Once defined, the initial steps of Cartan's method of equivalence indicate how L-contact structures may be classified in \S\S\ref{coframingssec}-\ref{QSEsec}. The constructions are similar in flavor to the author's thesis work (\cite{CPdis}, see also \cite{CPCAG} and its references), which may be more accessible than \cite{PZ}, but the conclusions of \S\ref{SQSEsec} and \S\ref{QSEsec} fundamentally rely on those of \cite{PZ} and \cite{SykesZelenko}. The L-contact structure of the unit tangent bundle is the subject of \S\ref{UMLcon}. We show in \S\ref{ONFBsec} that the orthonormal frame bundle of $(M,g)$ fibers over $UM$, realizing the structure equations of \S\S\ref{coframingssec}-\ref{QSEsec} for $U=UM$. In particular, this is sufficient to compare the L-contact structure of $UM$ to the homogeneous models in \S\ref{homogmodsec}. Finally, Appendix \ref{appendixsec} exhibits local hypersurface realizations of the 2-nondegenerate CR and L-contact models of \S\ref{SQlinessec} in order to explicitly realize the leaf space as a unit tangent bundle in \S\ref{FTsec}. \S\ref{Lieconsec} is a brief account of some similarities between L-contact and Lie contact geometry, which was a principal motivation for this work.

\vspace{\baselineskip}


\section{Homogeneous Models}\label{homogmodsec}


\subsection{Linear Algebra of (Split) Quaternionic Structures}\label{splitquatsec}


Denote $\im=\sqrt{-1}$, and recall the Pauli matrices, 
\begin{align}\label{Paulimat}
&\sigma_0=\begin{bmatrix}1&0\\0&1\end{bmatrix},
&\sigma_1=\begin{bmatrix}0&1\\1&0\end{bmatrix},
&&\sigma_2=\begin{bmatrix}0&-\im\\\im&0\end{bmatrix},
&&\sigma_3=\begin{bmatrix}1&0\\0&-1\end{bmatrix}.
\end{align}
The $\R$-algebra $\mathfrak{A}$ will refer to one of  
\begin{equation}\label{AHH}
\begin{aligned}
&\text{Quaternions}: &&\mathfrak{A}=\HH=\R\{\sigma_0,\im\sigma_1,-\im\sigma_2,\im\sigma_3\} 
&&=\left\{\left.\begin{bmatrix}w&-\overline{z}\\z&\overline{w}\end{bmatrix}\right| w,z\in\C\right\};\\
&\text{Split-Quaternions}: &&\mathfrak{A}=\cancel{\HH}=\R\{\sigma_0,\sigma_1,\sigma_2,\im\sigma_3\} 
&&=\left\{\left.\begin{bmatrix}w&\overline{z}\\z&\overline{w}\end{bmatrix}\hspace{1.9mm} \right| w,z\in\C\right\}.
\end{aligned} 
\end{equation}

Let $V$ be a real vector space and $\V=V\otimes\C$ its complexification, so that an $\R$-basis of $V$ is a $\C$-basis of $\V$. An \emph{$\varepsilon$-quaternionic structure} on $\V$ is given by a linear map $\K:V\to V$ with $\K^2=\varepsilon\mathbbm{1}$ (where $\mathbbm{1}$ is the identity and $\varepsilon=\pm1$) which is extended to $\V$ by conjugate-linearity: $\K(cv)=\overline{c}\K v$ for $c\in\C$ and $v\in V$. The case $\varepsilon=1$ is called \emph{split-quaternionic} and the case $\varepsilon=-1$ is called \emph{quaternionic}. Quaternionic structures may exist only when $\dim_\R V=\dim_\C\V$ is even. Write the external direct sum $\V\oplus\V$ as length-2 row vectors with $\V$-entries so that the right $\mathfrak{A}$-module structure given by matrix multiplication (on the right) is well-defined on the real subspace
\begin{align*}
\mathcal{V}=\left\{\begin{bmatrix}v&\K v\end{bmatrix}\ |\ v\in\V\right\}\cong\V.
\end{align*}  
The latter is an isomorphism of real vector spaces, by which we can say that the $\mathfrak{A}$-span of $v\in\V$ is 
\begin{align}\label{Aspanv}
v\mathfrak{A}=\left\{\left.\begin{bmatrix}v&\K v\end{bmatrix}\begin{bmatrix}w&\varepsilon\overline{z}\\z&\overline{w}\end{bmatrix}\ \right|\ w,z\in\C\right\}.
\end{align}
If $v\wedge\K v\neq0$, $v\mathfrak{A}$ is the complex 2-plane $\C\{v,\K v\}\in\mathsf{Gr}_2(\V)$ in the Grassmannian of complex 2-planes in $\V$. Equivalently, $v\wedge\K v\neq0\Rightarrow v\mathfrak{A}$ is the complex-projective line $\PP(v+c\K v)\subset\PP\V$ ($c\in\C$) through $\PP(v)$ in the complex-projectivization of $\V$. We also call \eqref{Aspanv} a split-quaternionic line $(\varepsilon=1)$ or quaternionic line $(\varepsilon=-1)$.

The action of $A\in GL(\V)$ on $\V\oplus\V$ is given by the standard action on the first summand and the conjugate $\overline{A}$ on the second, where conjugation on $GL(\V)$ is determined by conjugation on $\V$ with respect to the real subspace $V\subset\V$. Evidently, $A(\mathcal{V})\subset\mathcal{V}$ when $\overline{A}\K=\K A$. $G(\V,\K)\subset GL(\V)$ denotes the corresponding subgroup of symmetries of $\mathcal{V}$.

Let $V$ be equipped with a symmetric, nondegenerate bilinear form $b\in S^2V^*$ with respect to which $\mathtt{K}$ is symmetric,
\begin{align*}
&b(\K v_1,v_2)=b(v_1,\K v_2),
&v_1,v_2\in V.
\end{align*}
Note that $\K$ is $b$-orthogonal when $\varepsilon=1$, and when $\varepsilon=-1$ it must be that $b$ has split signature. $G(V,b)\subset GL(V)$ is the real orthogonal group specified by the signature of $b$. The $\C$-bilinear extension of $b$ to $\V$ is denoted $\boldsymbol{b}$, so that the complex orthogonal group $G(\V,\boldsymbol{b})$ is the complexification of $G(V,b)$. Antilinearity of $\K$ implies 
\begin{align}\label{CbK}
&\boldsymbol{b}(\K v_1,v_2)=\overline{\boldsymbol{b}(v_1,\K v_2)},
&v_1,v_2\in\V,
\end{align}
which defines a Hermitian form $\boldsymbol{h}(v_1,v_2)$ on $\V$. Moreover, \eqref{CbK} shows that $b$ extends naturally to an $\mathfrak{A}$-valued form on $\mathcal{V}$,
\begin{align*}
b_\mathfrak{A}(\begin{bmatrix}v_1&\K v_1\end{bmatrix},\begin{bmatrix}v_2&\K v_2\end{bmatrix})=
\sqrt{\varepsilon}\begin{bmatrix}\boldsymbol{b}(v_1,v_2)&\boldsymbol{b}(v_1,\K v_2)\\\boldsymbol{b}(\K v_1,v_2)&\boldsymbol{b}(\K v_1,\K v_2)\end{bmatrix}=
\begin{bmatrix}\sqrt{\varepsilon}\boldsymbol{b}(v_1,v_2)&\sqrt{\varepsilon}\overline{\boldsymbol{h}(v_1,v_2)}\\\sqrt{\varepsilon}\boldsymbol{h}(v_1,v_2)&\varepsilon\sqrt{\varepsilon}\overline{\boldsymbol{b}(v_1,v_2)}\end{bmatrix}.
\end{align*}
Thus, the symmetries $G(\mathcal{V},b_\mathfrak{A})$ of the pair $(\boldsymbol{b},\K)$ on $\V$ may be thought of as the intersection of complex-orthogonal $G(\V,\boldsymbol{b})$ and unitary symmetries $G(\V,\boldsymbol{h})$ of $\V$. Writing $\mathfrak{g}$ for the Lie algebra of the symmetry group $G(\mathcal{V},b_\mathfrak{A})$, we have in the quaternionic case $\mathfrak{g}=\mathfrak{so}^*(\dim V)$ and in the split-quaternionic case $\mathfrak{g}=\mathfrak{so}(\mathrm{sig}(b))$ where $\mathrm{sig}(b)$ is the signature of $b$ on $V$.

\vspace{\baselineskip}

\subsection{Split-Quaternionic Lines as Leaves of a 2-Nondegenerate Levi Foliation}\label{SQlinessec}

For $n\geq 1$, let $V=\R^{n+4}$ with $b\in S^2V^*$ and split-quaternionic $\K:V\to V$ represented in the standard basis of column vectors by
\begin{align*}
&b=\begin{bmatrix}0&0&\sigma_1\\0&\epsilon_{ij}&0\\\sigma_1&0&0\end{bmatrix},
&\K=\begin{bmatrix}\sigma_1&0&0\\0&\mathbbm{1}_n&0\\0&0&\sigma_1\end{bmatrix},
\end{align*} 
where 
\begin{align}\label{epsilonmetric}
&\epsilon_{ij}=\left\{\begin{array}{ccc}\epsilon_i=\pm1 &\text{for}&1\leq i= j\leq n\\0&\text{for}&i\neq j\end{array}\right.,
&\left.\begin{array}{l}p \text{ of the } \epsilon_i\text{'s are }1 \\q \text{ of the } \epsilon_i\text{'s are }-1\end{array}\right\}.
\end{align} 
To assemble a basis of $\V=\C^{n+4}$ adapted to $\boldsymbol{b}$ and $\boldsymbol{h}=\K^t\boldsymbol{b}$, begin with $\nu,\K\nu\in\V$ spanning a totally null 2-plane, add to these mutually orthogonal unit vectors $\upsilon_i\in\V$ -- i.e., $\boldsymbol{b}(\upsilon_i,\upsilon_j)=\boldsymbol{h}(\upsilon_i,\upsilon_j)=\epsilon_{ij}$ -- and name $\nu',\K\nu'$ the $\boldsymbol{h}$-duals of $\nu,\K\nu$. With respect to the ordered basis $\nu,\K\nu,\upsilon_i,\nu',\K\nu'\in\V$, our bilinear and Hermitian forms are represented
\begin{align}\label{SQbh}
&\boldsymbol{b}=\begin{bmatrix}0&0&\sigma_1\\0&\epsilon_{ij}&0\\\sigma_1&0&0\end{bmatrix},
&\boldsymbol{h}=\begin{bmatrix}0&0&\sigma_0\\0&\epsilon_{ij}&0\\\sigma_0&0&0\end{bmatrix}.
\end{align} 
Let $\B$ consist of all such adapted bases of $\V$. With the standard basis of $V$ serving as the identity element, $\B$ is identified with the Lie group $G(\mathcal{V},b_{\cancel{\HH}})=O(p+2,q+2)$ whose Lie algebra is $\mathfrak{g}=\mathfrak{so}(p+2,q+2)$. In our representation, $\mathfrak{g}$ is $(n+4)\times(n+4)$ matrices of the form
\begin{align}\label{splitLAmat}
&\begin{bmatrix}
\varsigma&\overline{\kappa}&\im\epsilon_{ij}\overline{\zeta}^j&\im\zeta^0&0\\
\kappa&\overline{\varsigma}&-\im\epsilon_{ij}\zeta^j&0&-\im\zeta^0\\
\eta^i&\overline{\eta}^i&\gamma^i_j&\im\zeta^i&-\im\overline{\zeta}^i\\
\im\eta^0&0&-\epsilon_{ij}\overline{\eta}^j&-\overline{\varsigma}&-\overline{\kappa}\\
0&-\im\eta^0&-\epsilon_{ij}\eta^j&-\kappa&-\varsigma
\end{bmatrix},
&\begin{array}{c}
\eta^0,\gamma^i_j,\zeta^0\in\R,\\
\\
\varsigma,\kappa,\eta^i,\zeta^i\in\C,\\
\\
\epsilon_{ij}\gamma^i_k+\epsilon_{ik}\gamma^i_j=0.
\end{array}
\end{align}
Here, we've used summation convention to write $\epsilon_{ij}\eta^j$, which would otherwise be $\epsilon_i\eta^i$ for each fixed $i$. We adhere to the summation convention throughout the paper.  

The symbols $\nu,\K\nu,\upsilon_i,\nu',\K\nu'$ will continue to denote the vectors of a general basis in $\B$, as well as the smooth, $\V$-valued functions on $\B$ which map each basis to the specified vector in it. These functions are differentiated using the $\mathfrak{g}$-valued Maurer-Cartan form on $\B$, represented by \eqref{splitLAmat} whose matrix entries now taken to be 1-forms on $\B$; e.g.,
\begin{align}\label{dnu}
\dd\nu&=\varsigma\otimes\nu+\kappa\otimes\K\nu+\eta^i\otimes\upsilon_i+\eta^0\otimes\im\nu'\in\Omega^1(\B,\V).
\end{align}
The Maurer-Cartan forms themselves are then differentiated according to the Maurer-Cartan equations,
\begin{equation}\label{splitMCeq}
\begin{aligned}
\dd\eta^0&=(\varsigma+\overline{\varsigma})\wedge\eta^0+\im\epsilon_{ij}\eta^i\wedge\overline{\eta}^j,\\
\dd\eta^i&=\zeta^i\wedge\eta^0+\varsigma\wedge\eta^i-\gamma^i_j\wedge\eta^j+\kappa\wedge\overline{\eta}^i,\\
\dd\kappa&=(\varsigma-\overline{\varsigma})\wedge\kappa+\im\epsilon_{ij}\zeta^i\wedge\eta^j,\\
\dd\gamma^i_j&=-\gamma^i_k\wedge\gamma^k_j,\\
\dd\varsigma&=\zeta^0\wedge\eta^0+\im\epsilon_{ij}\eta^i\wedge\overline{\zeta}^j+\kappa\wedge\overline{\kappa},\\
\dd\zeta^i&=\zeta^0\wedge\eta^i-\gamma^i_j\wedge\zeta^j-\overline{\varsigma}\wedge\zeta^i+\kappa\wedge\overline{\zeta}^i,\\
\dd\zeta^0&=-(\varsigma+\overline{\varsigma})\wedge\zeta^0-\im\epsilon_{ij}\zeta^i\wedge\overline{\zeta}^j.
\end{aligned}
\end{equation}

Name the two null cones
\begin{align*}
\N_{\boldsymbol{b}}&=\{v_0\in\V\ |\ \boldsymbol{b}(v_0,v_0)=0\}
&\Rightarrow
&&T_{v_0}\N_{\boldsymbol{b}}=\{v\in\V\ |\ \boldsymbol{b}(v_0,v)=0\},\\
\N_{\boldsymbol{h}}&=\{v_0\in\V\ |\ \boldsymbol{h}(v_0,v_0)=0\}
&\Rightarrow
&&T_{v_0}\N_{\boldsymbol{h}}=\{v\in\V\ |\ \Re\boldsymbol{h}(v_0,v)=0\},
\end{align*}
and let $\N$ be their intersection,
\begin{align*}
&\N=\N_{\boldsymbol{b}}\cap\N_{{\boldsymbol{h}}}
&(\dim_\R\N=2n+5).
\end{align*}
Over each $v_0\in\N$ there are complex subbundles
\begin{align*}
\C\{v_0\}\subset\ker\boldsymbol{h}(v_0,\cdot)\subset T_{v_0}\N,
\end{align*}
the latter having real corank one in $T\N$. Let $\U$ be the image of $\N$ under complex projectivization $\PP:\V\to\C\PP^{n+3}$,
\begin{align*}
&\U=\PP(\N),
&(\dim_\R\U=2n+3),
\end{align*} 
and name its corank-1, complex distribution
\begin{align}\label{splitD}
D_{\PP(v_0)}=\PP_*\ker\boldsymbol{h}(v_0,\cdot)\subset T_{\PP(v_0)}\U.
\end{align}

$\B$ fibers over $\N$ and $\U$ via the projections
\begin{equation}\label{splitframeproj}
\begin{aligned}
\B&\to\N\to\U\\
(\nu,\K\nu,\upsilon_i,\nu',\K\nu')&\mapsto \nu\mapsto\PP\nu,
\end{aligned}
\end{equation}
and we have
\begin{align*}
T_{\nu}\N=\underbrace{\C\{\nu,\K\nu,\upsilon_i\}}_{\ker\boldsymbol{h}(\nu,\cdot)}\oplus\R\{\im\nu'\}
\end{align*}
for each basis in the fiber of \eqref{splitframeproj}. In this sense $\B$ is an \emph{adapted (co)frame bundle} of $\U$: by composing with a local section $s:\U\to\B$, $\nu\in C^\infty(\B,\V)$ is a local section of the tautological line bundle on $\U\subset\PP\V$ while $\K\nu,\upsilon_i\in C^\infty(\B,\V)$ are vector fields whose $\PP_*$-images frame the distribution \eqref{splitD}, and $\im\nu'\in C^\infty(\B,\V)$ completes a local framing of $T\U$. Dually, with a section $s:\U\to\B$ we can pull back the Maurer-Cartan forms on $\B$ to get a local coframing
\begin{align*}
&s^*\eta^0,s^*\eta^i,s^*\kappa\in\Omega^1(\U,\C),
&s^*\eta^0\in\Gamma(D^\bot),
\end{align*}
where the latter means that $s^*\eta^0$ is a section of the annihilator of $D$ in $T^*\U$, as implied by \eqref{dnu}. The first of the Maurer-Cartan equations \eqref{splitMCeq} then shows that $D$ not integrable, as 
\begin{align}\label{homogleviform}
s^*\dd\eta^0\equiv\im\epsilon_{ij}s^*\eta^i\wedge s^*\overline{\eta}^j\mod s^*\eta^0.
\end{align}
Indeed, \eqref{homogleviform} is a local representation of the \emph{Levi form} of $\U$, which measures the failure of the sheaf $\Gamma(D)$ of local sections of $D$ to be closed under the Lie bracket of vector fields. 

In this setting, the Levi form coincides with the Hermitian form $\boldsymbol{h}$ acting on the vector fields $\K\nu,\upsilon_i$ which locally frame $D$ (henceforth, we elide $s$ in discussing local (co)framings of $\U$). In particular, Lie brackets of the $\upsilon_i$ are no longer sections of $D$ as $\boldsymbol{h}(\upsilon_i,\upsilon_j)=\epsilon_{ij}\neq0$. By contrast, the real and imaginary parts of $\K\nu$ span a rank-2 subbundle of $D$, the \emph{Levi kernel}, which is integrable by virtue of $\boldsymbol{h}(\K\nu,\K\nu)=0$ and the Newlander-Nirenberg Theorem. As such, the Levi kernel is tangent to the leaves of a foliation of $\U$ by complex curves, the \emph{Levi foliation}. The foregoing argument is the vector-field equivalent of the observation that 
\begin{align}\label{splitCPline}
&\text{for }(\nu,\K\nu,\upsilon_i,\nu',\K\nu')\in\B,
&\{\PP(\nu+c\K\nu)\ |\ c\in\C\}\subset\U
&&\text{is a complex-projective line.}
\end{align}
There is a natural equivalence relation on $\U$:
\begin{align*}
&\widetilde{u}_1\sim \widetilde{u}_2
&\Longleftrightarrow
&&\widetilde{u}_1,\widetilde{u}_2\text{ lie in the same line }\eqref{splitCPline};
&&(\widetilde{u}_1,\widetilde{u}_2\in\U).
\end{align*}
The \emph{leaf space} of the Levi foliation of $\U$ is the image of the canonical quotient projection 
\begin{align*}
&Q:\U\to\U/\sim,
&U=Q(\U)
&&(\dim_\R U=2n+1).
\end{align*}
As we saw in \eqref{Aspanv}, \eqref{AHH}, the projective line \eqref{splitCPline} is the split-quaternionic line $\nu\cancel{\HH}\subset\U$, hence $U=\mathsf{Gr}_2^0\V$, the Grassmannian of totally $\boldsymbol{b},\boldsymbol{h}$-null complex 2-planes in $\V$. Complex-scalar multiples of $\K\nu$ are the fibers of $Q:\U\to U$, but local trivializations $\U\approx U\times\C$ of this fibration necessarily fail to preserve the geometry of $\U$, which we discern from the Maurer-Cartan equations \eqref{splitMCeq} as follows.  

It's clear from \eqref{dnu} and \eqref{splitCPline} that the 1-form $\kappa\in\Omega^1(\U,\C)$ measures the infinitesimal variation of $\nu\in\U$ in the direction of $\K\nu$, whose triviality under the Levi form is evidenced by the absence of $\kappa$ (or $\overline{\kappa}$) in \eqref{homogleviform}. Subsequent Maurer-Cartan equations read
\begin{align}\label{splithomog2non}
\dd\eta^i\equiv \kappa\wedge\overline{\eta}^i\mod\{\eta^0,\eta^j\},
\end{align}
which shows that $\U$ is \emph{2-nondegenerate}; i.e., there is no local diffeomorphism $\U\to U\times\C$ whose pushforward preserves the complex structure on $D\subset T\U$. In other words, Lie derivatives of (sections of) $D$ with respect to (sections of) the Levi kernel vary the fibers of $D$ along those of $Q:\U\to U$ in such a way that $Q_*D\subset TU$ has no canonical complex structure.  

We conclude this section by comparing $\U=\PP(\N)$ and $U=\mathsf{Gr}_2^0\V$ as homogeneous quotients of $\B\cong O(p+2,q+2)$. For $(\nu,\K\nu,\upsilon_i,\nu',\K\nu')\in\B$ we name the stabilizer subgroups of $O(p+2,q+2)$:
\begin{equation}
\begin{aligned}
&\mathcal{R}_1\subset O(p+2,q+2)
&\text{stabilizing the complex line}
&&\C\{\nu\}\in\PP\V,\\
&\mathcal{R}_2\subset O(p+2,q+2)
&\text{stabilizing the complex 2-plane}
&&\C\{\nu,\K\nu\}\in\mathsf{Gr}_2^0\V,
\end{aligned}
\end{equation}
so that $\U=O(p+2,q+2)/\mathcal{R}_1$ and $U=O(p+2,q+2)/\mathcal{R}_2$. Roughly illustrated,
\begin{align}\label{stabsubgps}
&\mathcal{R}_1=\begin{bmatrix}*&*&*\cdots*\\0&*&*\cdots*\\0&*&*\cdots*\\\vdots&\vdots&*\cdots*\\0&*&*\cdots*\end{bmatrix},
&\mathcal{R}_2=\begin{bmatrix}*&*&*\cdots*\\ * &*&*\cdots*\\0&0&*\cdots*\\\vdots&\vdots&*\cdots*\\0&0&*\cdots*\end{bmatrix}
&&\subset O(p+2,q+2),
\end{align}
whence \eqref{splitLAmat} shows that the Levi kernel 1-form $\kappa$ is semibasic for $\B\to\U$ and vertical for $\B\to U$.

\vspace{\baselineskip}

\subsection{Quaternionic Lines as Leaves of a 2-Nondegenerate Levi Foliation}\label{Qlinessec}

The discussion in \S\ref{SQlinessec} carries over to the quaternionic case \emph{mutatis mutandis}, so we record what mutates. For $p\geq 1$, let $n=2p$ and set $V=\R^{n+4}$ as before, with bilinear form and quaternionic structure
\begin{align*}
&b=\begin{bmatrix}0&0&0&\sigma_1\\0&0&\mathbbm{1}_p&0\\0&\mathbbm{1}_p&0&0\\\sigma_1&0&0&0\end{bmatrix},
&\K=\begin{bmatrix}-\im\sigma_2&0&0&0\\0&0&-\mathbbm{1}_p&0\\0&\mathbbm{1}_p&0&0\\0&0&0&-\im\sigma_2\end{bmatrix},
\end{align*}
so that the Lie algebra $\mathfrak{g}=\mathfrak{so}^*(2p+4)$ is represented
\begin{align}\label{QLAmat}
&\begin{bmatrix}
\varsigma&-\overline{\kappa}&\im\overline{\zeta}^i&\im\overline{\zeta}^{p+i}&\im\zeta^0&0\\
\kappa&\overline{\varsigma}&\im\zeta^{p+i}&-\im\zeta^{i}&0&-\im\zeta^0\\
\eta^i&-\overline{\eta}^{p+i}&\xi^i_j &\xi^i_{p+j} &\im\zeta^i&\im\overline{\zeta}^{p+i}\\
\eta^{p+i}&\overline{\eta}^i&\xi^{p+i}_j &\xi^{p+i}_{p+j} &\im\zeta^{p+i}&-\im\overline{\zeta}^{i}\\
\im\eta^0&0&-\overline{\eta}^i&\overline{\eta}^{p+i}&-\overline{\varsigma}&\overline{\kappa}\\
0&-\im\eta^0&-\eta^{p+i}&-\eta^{i}&-\kappa&-\varsigma
\end{bmatrix}
&\begin{array}{cc}\xi^i_j,\xi^{p+i}_j,\xi^i_{p+j},\xi^{p+i}_{p+j}\in\C,&1\leq i\leq p,\\ \\
\xi^i_j+\overline{\xi}^j_i=0, &\xi^{p+i}_{p+j}=\overline{\xi}^i_j,\\ \\
\xi^{p+i}_j+\xi^{p+j}_i=0, &\xi^i_{p+j}=-\overline{\xi}^{p+i}_j.
\end{array}
\end{align}
We highlight a few of the Maurer-Cartan equations,
\begin{equation}\label{quatMCeq}
\begin{aligned}
\dd\eta^0&=(\varsigma+\overline{\varsigma})\wedge\eta^0+\im\delta_{ij}\eta^i\wedge\overline{\eta}^j-\im\delta_{ij}\eta^{p+i}\wedge\overline{\eta}^{p+j},\\
\dd\eta^i&=\zeta^i\wedge\eta^0+\varsigma\wedge\eta^i-\xi^i_j\wedge\eta^j-\xi^i_{p+j}\wedge\eta^{p+j}-\kappa\wedge\overline{\eta}^{p+i},\\
\dd\eta^{p+i}&=\zeta^{p+i}\wedge\eta^0+\varsigma\wedge\eta^{p+i}-\xi^{p+i}_j\wedge\eta^j-\xi^{p+i}_{p+j}\wedge\eta^{p+j}+\kappa\wedge\overline{\eta}^i.
\end{aligned}
\end{equation}
In particular, the Levi form \eqref{homogleviform} of $\U$ matches \eqref{epsilonmetric} for $q=p$ and $\epsilon_i=1=-\epsilon_{p+i}$ (with all due apology, we let $i\leq \tfrac{1}{2}n$ in the quaternionic setting). Moreover, the 2-nondegeneracy of $\U$ which was evinced by \eqref{splithomog2non} for split-quaternionic lines is revealed here by
\begin{align}\label{quathomog2non}
\dd\begin{bmatrix}\eta^i\\\eta^{p+i}\end{bmatrix}\equiv
\begin{bmatrix}0&-\kappa\\\kappa&0\end{bmatrix}\wedge\begin{bmatrix}\overline{\eta}^i\\\overline{\eta}^{p+i}\end{bmatrix}
\mod\{\eta^0,\eta^j,\eta^{p+j}\}.
\end{align}
In the felicitous language of the next section, the discrepancy between \eqref{splithomog2non} and \eqref{quathomog2non} is explained by the fact that the leaf space $U$ of $\U$ carries an almost-split-quaternionic structure in \S\ref{SQlinessec} and an almost-quaternionic structure in \S\ref{Qlinessec}.

\vspace{\baselineskip}


\section{L-contact Manifolds}\label{Lconsec}


\subsection{Almost (Hyper)CR Structures}\label{AHCsec}

Let $\U$ be a smooth manifold. For any fiber bundle $E\to \U$, $E_o$ denotes the fiber of $E$ over $o\in\U$, and $\Gamma(E)$ is the sheaf of smooth local sections. If $E$ is a vector bundle, $E^*$ is its dual and $\C E$ its complexification, with fibers $\C E_o=E_o\otimes_\R\C=E_o\oplus\im E_o$ ($\im=\sqrt{-1}$), and $\mathbbm{1}$ denotes the identity endomorphism field on $E$ or $\C E$.

An \emph{almost-complex structure} on an even-rank distribution $D\subset T\U$ is an endomorphism field 
\begin{align}\label{ACdef}
&J:D\to D
&\text{satisfying}
&&J^2=-\mathbbm{1}.
\end{align}
An almost-complex structure determines a splitting
\begin{align}\label{ACLambdasplit}
\C D=\Lambda\oplus\overline{\Lambda}
\end{align}
into $\pm\im$-eigenspaces of $J$,
\begin{align}\label{Lambdadef}
&\Lambda=\{X-\im JX\ |\ X\in D\},
&\overline{\Lambda}=\{X+\im JX\ |\ X\in D\}
&&\subset\C D.
\end{align}
For $\varepsilon=\pm1$, an \emph{almost-$\varepsilon$-hypercomplex structure} is an almost-complex structure as in \eqref{ACdef} along with a second endomorphism field 
\begin{align}\label{AeQdef}
&K:D\to D
&\text{satisfying}
&&J\circ K=-K\circ J,
&&K^2=\varepsilon\mathbbm{1}.
\end{align}
In view of \eqref{Lambdadef}, it is apparent that the $\C$-linear extension $K:\C D\to\C D$ echoes the \emph{conjugate}-linear $\varepsilon$-quaternionic map $\K:\V\to\V$ of \S\ref{splitquatsec} in that 
\begin{align*}
&K:\overline{\Lambda}\to\Lambda,
&K:\Lambda\to\overline{\Lambda},
&&K(\overline{Y})=\overline{K(Y)}\quad \forall Y\in\Lambda.
\end{align*}
Thus, almost-$(-1)$-hypercomplex structures require that $\mathrm{rank}_\C\Lambda=\tfrac{1}{2}\mathrm{rank}D$ is even. 

\begin{rem}\label{HCtermrem}
Typically, ``almost-(hyper)complex structures" refer to those defined on $D=T\U$ for $\dim\U$ even, and $\U$ is a (hyper)complex manifold if the endomorphisms $J,K$ satisfy some additional integrability conditions. For proper subbundles $D\subsetneq T\U$, the term ``CR" takes the place of ``complex," subject to some smoothness and integrability considerations for $D$. Regarding definition \ref{AeQdef}, we should also mention that the standard parlance treats (almost) hypercomplex structures as special cases of (almost) quaternionic structures (see \cite[\S4.1.8]{CapSlovak}, which only pertains to $\varepsilon=-1$), the latter characterized by a 3-dimensional subbundle of $D^*\otimes D$ which admits \emph{local} framing by some $\{J,K,JK\}$, whereas ``hypercomplex" is reserved for distinguished, global $J,K$. 
\end{rem}

Let $\U$ have odd dimension $\geq 3$ and a smooth, corank-1 distribution $D\subset T\U$. $D$ is integrable in the Frobenius sense if its sections are closed under the Lie bracket of vector fields, $[\Gamma(D),\Gamma(D)]\subset\Gamma(D)$. The \emph{complexified Levi bracket} (cf. \cite[Def. 3.1.7]{CapSlovak}) 
\begin{align}\label{Levidom}
\mathcal{L}:\C D\times\C D\to\C(T\U/D)
\end{align}
measures the failure of $D$ to be integrable,
\begin{equation}\label{Levidef}
\mathcal{L}(y_1,y_2)=\im[Y_1,Y_2](o)\mod\C D_o\left\{\begin{array}{c} y_1,y_2\in\C D_o,\\Y_1,Y_2\in\Gamma(\C D),\\ Y_1(o)=y_1,\ Y_2(o)=y_2.\end{array}\right.
\end{equation}
Because it takes values in the quotient $\C(T\U/D)$, $\mathcal{L}$ is tensorial and, up to a choice of local trivialization $\C(T\U/D)\approx\C$, may be considered a skew-symmetric, $\C$-bilinear form on $\C D$.

An \emph{almost-CR structure} $J:D\to D$ is an almost-complex structure \eqref{ACdef} which satisfies the \emph{partial-integrability} condition,
\begin{align}\label{partialintLJ}
&\mathcal{L}(JX,JY)=\mathcal{L}(X,Y)
&\Longleftrightarrow 
&&\mathcal{L}(JX,Y)+\mathcal{L}(X,JY)=0
&&\forall X,Y\in\Gamma(D). 
\end{align}
In terms of \eqref{Lambdadef}, \eqref{partialintLJ} can be rephrased
\begin{align}\label{partialint}
&[\Gamma(\Lambda),\Gamma(\Lambda)]\subset\Gamma(\C D)
&\Longleftrightarrow 
&&\mathcal{L}|_{\Lambda\times\Lambda}=0=\mathcal{L}|_{\overline{\Lambda}\times\overline{\Lambda}},
\end{align} 
so that \eqref{ACLambdasplit} is an $\mathcal{L}$-null splitting.

\begin{definition}\label{CRdef} A \emph{CR structure} on an odd-dimensional manifold $\U$ with a corank-1 distribution $D\subset T\U$ is a splitting $\C D=H\oplus\overline{H}$ satisfying the \emph{CR integrability} condition:
\begin{equation}\label{CRint}
[\Gamma(H),\Gamma(H)]\subset\Gamma(H).
\end{equation}
\end{definition}

\begin{rem}\label{Nijenhuisrem}
A CR structure determines an almost-CR structure as follows: for $X\in D$, $X=\Re Z$ for some $Z\in H$, and we can define $JX=-\Im Z$ (where $\Re Z=\tfrac{1}{2}(Z+\overline{Z})$, $\Im Z=-\tfrac{\im}{2}(Z-\overline{Z})$). Thus, the distinction between an almost-CR structure and a CR structure as we've defined them  -- aside from the choice of labels $\Lambda$ and $H$ for the $\im$-eigenspaces -- is exactly the distinction between partial integrability \eqref{partialint} and CR integrability \eqref{CRint}, stated in their complex-conjugate forms as 
\begin{align*}
&[\Gamma(\overline{\Lambda}),\Gamma(\overline{\Lambda})]\subset\Gamma(\Lambda\oplus\overline{\Lambda})
&\text{vs}
&&[\Gamma(\overline{H}),\Gamma(\overline{H})]\subset\Gamma(\overline{H}).
\end{align*}
This difference is quantified by the \emph{Nijenhuis tensor} $N_\Lambda:\Lambda\wedge\Lambda\to\Lambda$ of $\Lambda$:
\begin{align*}
&N_\Lambda(Y_1,Y_2)=[\overline{Y}_1,\overline{Y}_2]\mod\overline{\Lambda},
&Y_1,Y_2\in\Gamma(\Lambda),
\end{align*}
which obviously vanishes for $H$. As discussed in \cite[\S4.2.4]{CapSlovak} (in the Levi-nondegenerate case), there is no need to exclude partially-integrable structures from the CR category, as they can simply be considered ``CR structures with torsion" -- the harmonic component of which is the Nijenhuis tensor -- and the usual classification scheme of Cartan's method of equivalence still applies. We choose to name and label them differently because in \S\ref{2nonsec} we consider Levi-degenerate (CR integrable) structures on $\U$ that give rise to a complex-parameter-family of (partially integrable) almost-CR structures on the leaf space of $\U$, so we hope that investing in the distinction now pays off in conceptual clarity later.   
\end{rem}

An \emph{almost-$\varepsilon$-hyper-CR structure}\footnote{Hyper-CR structures are defined differently in \cite{hyperCR}, extending the terminology of \cite{hyperherm}, where ``hyper-Hermitian" refers to hypercomplex structures (as in Remark \ref{HCtermrem}) that are orthogonal for some metric. Our usage does not contradict that of \cite{hyperCR}; in fact, the hyper-CR structure of a unit tangent bundle (\S\ref{UTBsec}) is orthogonal for the Sasaki metric.} on a manifold equipped with an almost-CR structure \eqref{partialintLJ} is another endomorphism field $K:D\to D$ satisfying
\begin{align}\label{ASQdef}
&K^2=\varepsilon\mathbbm{1},
&KJ=-JK,
&&\mathcal{L}(KX,Y)+\mathcal{L}(X,KY)=0\quad\forall X,Y\in\Gamma(D).
\end{align} 
As in \S\ref{splitquatsec}, $K$ will be called \emph{split-quaternionic} when $\varepsilon=1$ and \emph{quaternionic} when $\varepsilon=-1$. To register a final analogy between $\K$ and $K$, note that \eqref{CbK} implies the Hermitian identity
\begin{align*}
&\boldsymbol{h}(\K v_1,\K v_2)
=\varepsilon\overline{\boldsymbol{h}(v_1,v_2)}
&v_1,v_2\in\V,
\end{align*}
whereas \eqref{Levidef} and \eqref{ASQdef} show
\begin{align}\label{ASQherm}
&\mathcal{L}(K\overline{Y}_1,K\overline{Y}_2)
=\varepsilon\overline{\mathcal{L}(Y_1,Y_2)},
&Y_1,Y_2\in\C D.
\end{align}

\vspace{\baselineskip}

\subsection{2-Nondegenerate CR Manifolds and Their Leaf Spaces}\label{2nonsec}

Let $\U$ be a smooth manifold of dimension $2m+1$ equipped with a CR structure as in Definition \ref{CRdef}; i.e., a corank-1 subbundle $D^{2m}\subset T\U$ whose complexification splits into the CR subbundle and its complex conjugate (anti-CR) bundle,
\begin{align*}
&\C D=H\oplus\overline{H}
&(\mathrm{rank}_\C H=\mathrm{rank}_\C\overline{H}=m),
\end{align*}
satisfying the CR integrability condition \eqref{CRint}. In particular, $H$ and $\overline{H}$ Levi-null, 
\begin{equation*}
\mathcal{L}|_{H\times H}=0=\mathcal{L}|_{\overline{H}\times\overline{H}},
\end{equation*}
where $\mathcal{L}$ is the Levi bracket \eqref{Levidom}, \eqref{Levidef}. The \emph{Levi kernel} $\LK\subset H$ is
\begin{align*}
\LK=\{X\in H\ |\ \mathcal{L}(X,Y)=0\ \forall Y\in\C D\},
\end{align*}
and we assume $\mathrm{rank}_\C\LK=k>0$ is constant. By the Newlander-Nirenberg Theorem, $\U$ is foliated by complex submanifolds of complex dimension $k$ which are tangent to $\LK\oplus\overline{\LK}$, the \emph{Levi-foliation}. The Levi-foliation introduces an equivalence relation on $\U$,
\begin{align*}
&\widetilde{u}_1\sim \widetilde{u}_2
&\Longleftrightarrow
&&\widetilde{u}_1,\widetilde{u}_2\text{ lie in the same leaf of the Levi-foliation};
&&(\widetilde{u}_1,\widetilde{u}_2\in\U).
\end{align*}
The \emph{leaf space} of $\U$ is the image of the canonical quotient map $Q:\U\to U=\U/\sim$. $U$ is a smooth manifold of dimension $2n+1$ where $n=m-k$. Whether a bundle or tensor on $\U$ descends along $Q$ to be well-defined on $U$ depends on its behavior subject to the Lie derivative with respect to the Levi kernel; e.g., 
\begin{align*}
[\Gamma(\LK),\Gamma(\C D)]\subset\Gamma(\C D),
\end{align*}
hence $\Delta=Q_*D\subset TU$ is a well-defined contact distribution on the leaf space. Similarly, the Levi form $\mathcal{L}$ of $\U$ descends to a symplectic form $\mathfrak{L}$ on $\C\Delta\cong\C D/(\LK\oplus\overline{\LK})$. 

However, the leaf space does not necessarily inherit a CR structure from $\U$, as $[\Gamma(\LK),\Gamma(\overline{H})]\not\subset \Gamma(\LK\oplus\overline{H})$ in general. More precisely, for each $\widetilde{u}\in\U$ in a given leaf $Q(\widetilde{u})=u\in U$, we have complementary $\mathfrak{L}$-Lagrangian subspaces
\begin{align}\label{leaflag}
&\Lambda_{\widetilde{u}}=Q_*(H_{\widetilde{u}})\cong H_{\widetilde{u}}/\LK_{\widetilde{u}},
&\overline{\Lambda}_{\widetilde{u}}=Q_*(\overline{H}_{\widetilde{u}})\cong\overline{H}_{\widetilde{u}}/\overline{\LK}_{\widetilde{u}}
&&\subset\C\Delta_u,
\end{align}
and integrability of the Levi kernel ensures the Lie derivatives $[X,\overline{Y}]$, $[\overline{X},Y]$ ($X\in\Gamma(\LK), Y\in\Gamma(H)$) descend to well-defined, tensorial operators
\begin{align*}
&\mathrm{ad}_{X_{\widetilde{u}}}:\left\{\begin{array}{l}\overline{\Lambda}_{\widetilde{u}}\longrightarrow\Lambda_{\widetilde{u}}\\\Lambda_{\widetilde{u}}\stackrel{\text{def}}{\longrightarrow}0\end{array}\right.,
&\mathrm{ad}_{\overline{X}_{\widetilde{u}}}:\left\{\begin{array}{l}\Lambda_{\widetilde{u}}\longrightarrow\overline{\Lambda}_{\widetilde{u}}\\\overline{\Lambda}_{\widetilde{u}}\stackrel{\text{def}}{\longrightarrow}0\end{array}\right.,
&&X_{\widetilde{u}}\in \LK_{\widetilde{u}},
\end{align*}
which we have extended trivially to the rest of $\C\Delta$ as \eqref{CRint} obviates the need to consider $[\LK,H]$ or $[\overline{\LK},\overline{H}]$. $\U$ is called \emph{straightenable} if $\mathrm{ad}_X=0$ $\forall X\in \LK$, meaning the subspaces $\{\Lambda_{\widetilde{u}}\subset\C\Delta_{u}\ |\ \widetilde{u}\in u\}$ are all canonically identified and thus determine a well-defined CR structure on the leaf space $U$. The opposite extreme is $\mathrm{ad}_X\neq 0$ for every nonzero $X\in \LK$, in which case $U$ has no canonical CR structure and $\U$ is called \emph{2-nondegenerate}.

In any case, the maps $\mathrm{ad}_\LK=\{\mathrm{ad}_X\ |\ X\in \LK\}$ and $\mathrm{ad}_{\overline{\LK}}$ serve to quantify the obstruction to straightenability of $\U$ in a manner that we will exploit geometrically. Varying $\widetilde{u}$ smoothly within a fixed leaf $Q(\widetilde{u})=u\in U$ will vary the Lagrangian subspaces \eqref{leaflag} of $\C\Delta_{u}$, sweeping out two submanifolds of the Lagrangian-Grassmannian,
\begin{align}\label{LconjL}
&L_u=\{\Lambda_{\widetilde{u}}\subset\C\Delta_{u}\ |\ \widetilde{u}\in u\},
&\overline{L}_u=\{\overline{\Lambda}_{\widetilde{u}}\subset\C\Delta_{u}\ |\ \widetilde{u}\in u\}
&&\subset\mathsf{LaGr}(\C\Delta_{u}).
\end{align}
If $\U$ is straightenable then $L_u$ and $\overline{L}_u$ are singletons, but in the 2-nondegenerate setting, they encode the Levi foliation of $\U$ within the bundle $\mathsf{LaGr}(\C\Delta)\to U$ over the leaf space, as we now demonstrate. 

The Jacobi identity for the Lie bracket of vector fields implies 
\begin{align}\label{adKder}
&\mathfrak{L}(\mathrm{ad}_{\overline{X}}(Y_1),Y_2)+\mathfrak{L}(Y_1,\mathrm{ad}_{\overline{X}}(Y_2))=0,
&\forall X\in \LK_{\widetilde{u}},\ Y_1,Y_2\in\Lambda_{\widetilde{u}}\oplus\overline{\Lambda}_{\widetilde{u}}.
\end{align} 
As $\mathfrak{L}$-Lagrangian complements, $\Lambda_{\widetilde{u}}$ and $\overline{\Lambda}_{\widetilde{u}}$ and may be regarded as each other's dual spaces, whence 
\begin{align*}
&\mathrm{ad}_{\overline{\LK}_{\widetilde{u}}}\subset S^2(\Lambda_{\widetilde{u}})^*\cong T_{\Lambda_{\widetilde{u}}}\mathsf{LaGr}(\C\Delta_u)\subset\text{\sc{Hom}}(\Lambda_{\widetilde{u}},\overline{\Lambda}_{\widetilde{u}}),\\
&\mathrm{ad}_{\LK_{\widetilde{u}}}\subset S^2(\overline{\Lambda}_{\widetilde{u}})^*\cong T_{\overline{\Lambda}_{\widetilde{u}}}\mathsf{LaGr}(\C\Delta_u)\subset\text{\sc{Hom}}(\overline{\Lambda}_{\widetilde{u}},\Lambda_{\widetilde{u}}).
\end{align*}
When $\U$ is 2-nondegenerate, $\mathrm{ad}:\LK\to\mathrm{ad}_\LK$ is injective and serves to identify $\LK$ and $\overline{\LK}$ with the tangent bundles of their respective factors in the product space
\begin{align*}
\overline{L}_u\times L_u=\{(\overline{\Lambda}_{\widetilde{u}_1},\Lambda_{\widetilde{u}_2})\ |\ \widetilde{u}_1,\widetilde{u}_2\in u\}\subset\mathsf{LaGr}(\C\Delta_u)\times\mathsf{LaGr}(\C\Delta_u),
\end{align*}
which features an involution combining transposition and conjugation,
\begin{align*}
\overline{\tau}:\overline{L}_u\times L_u&\to \overline{L}_u\times L_u\\
(\overline{\Lambda}_{\widetilde{u}_1},\Lambda_{\widetilde{u}_2})&\mapsto (\overline{\Lambda}_{\widetilde{u}_2},\Lambda_{\widetilde{u}_1})
\end{align*}
whose fixed-point set $\mathbb{L}_u=\{(\overline{\Lambda}_{\widetilde{u}},\Lambda_{\widetilde{u}})\in \overline{L}_u\times L_u\ |\ \widetilde{u}\in u\}$ is a submanifold of $\mathsf{LaGr}(\C\Delta_u)$ sitting diagonally in $\mathsf{LaGr}(\C\Delta_u)\times\mathsf{LaGr}(\C\Delta_u)$. Now $\overline{\tau}_*:\LK_{\widetilde{u}_1}\oplus \overline{\LK}_{\widetilde{u}_2}\to \LK_{\widetilde{u}_2}\oplus \overline{\LK}_{\widetilde{u}_1}$ is the identity map on $T\mathbb{L}_u$, which is therefore $\Re(\LK\oplus\overline{\LK})$.

The total space $\mathbb{L}=\{\mathbb{L}_u\ |\ u\in U\}$ defines a bundle over the leaf space from which $\U$ is \emph{recoverable}, in the language of \cite{SykesZelenko} (at least under some additional hypotheses on $\mathrm{ad}_\LK$ which will always be satisfied for our purposes). Generalizing this construction enables us to circumvent $\U$ while we investigate the geometry of $U$.  

\begin{definition}\label{DLCdef}(cf. \cite[Def. 2.3]{SykesZelenko})
Let $U$ be a smooth manifold of dimension $2n+1$. A \emph{dynamical Legendrian-contact structure} on $U$ consists of: 
\begin{enumerate}[label=\textbf{\arabic*.}]

\item\label{DLCDeltaLevi} a contact distribution $\Delta\subset TU$ with ``Levi form" $\mathfrak{L}:\C\Delta\times\C\Delta\to\C(TU/\Delta)$ given by 
\begin{align*}
&\mathfrak{L}(X,Y)=\im[X,Y]\mod\C\Delta,
&X,Y\in\Gamma(\C\Delta);
\end{align*}

\item\label{DLCLG} $\mathsf{LaGr}(\C\Delta)\to U$ whose fiber over $u\in U$ is 
\begin{align*}
\mathsf{LaGr}(\C\Delta_u)=\{\Lambda\subset\C\Delta_u\ |\ \dim_\C\Lambda=n,\ \mathfrak{L}(v_1,v_2)=0\ \forall v_1,v_2\in\Lambda\};
\end{align*}

\item the product bundle $\mathsf{LaGr}(\C\Delta)\times \mathsf{LaGr}(\C\Delta)\to U$ with two involutions
\begin{align*}
\overline{\tau},\tau:\mathsf{LaGr}(\C\Delta_u)\times \mathsf{LaGr}(\C\Delta_u)&\to \mathsf{LaGr}(\C\Delta_u)\times \mathsf{LaGr}(\C\Delta_u)
\end{align*}
given by transposition $\tau(\Lambda_1,\Lambda_2)=(\Lambda_2,\Lambda_1)$ and its composition with complex conjugation with respect to the real subspace $\Delta_u\subset\C\Delta_u$,  $\overline{\tau}(\Lambda_1,\Lambda_2)=(\overline{\Lambda}_2,\overline{\Lambda}_1)$ for $\Lambda_1,\Lambda_2\in\mathsf{LaGr}(\C\Delta_u)$;

\item\label{DLCLmfd} a smooth submanifold $\mathbb{L}\subset\mathsf{LaGr}(\C\Delta)\times \mathsf{LaGr}(\C\Delta)$ such that, $\forall u\in U$,

\begin{enumerate}[label=\textbf{\alph*.}]

\item\label{DLCLC} $\mathbb{L}_u=\mathbb{L}\cap\mathsf{LaGr}(\C\Delta_u)\times\mathsf{LaGr}(\C\Delta_u)$ is a complex manifold of complex dimension $k$;

\item\label{DLCCR} $\tau(\mathbb{L}_u)\cap\mathbb{L}_u=\varnothing$;

\item\label{DLCconj} $\overline{\tau}|_{\mathbb{L}_u}$ is the identity map on $\mathbb{L}_u$.
\end{enumerate}
\end{enumerate}
\end{definition}

\begin{rem}\label{LconK}
It is implicit in the definition that we identify $\C T_{(\overline{\Lambda},\Lambda)}\mathbb{L}_u$ with the subspace of 
\begin{align*}
T_{(\overline{\Lambda},\Lambda)}(\mathsf{LaGr}(\C\Delta_u)\times\mathsf{LaGr}(\C\Delta_u))\subset\text{\sc{Hom}}(\overline{\Lambda},\Lambda)\oplus\text{\sc{Hom}}(\Lambda,\overline{\Lambda})
\end{align*}
that contains $T_{(\overline{\Lambda},\Lambda)}\mathbb{L}$ as its real subspace with respect to the conjugation operator $\overline{\tau}_*$. Thus, we extend the rationale for leaf spaces above to obtain a CR splitting of $\C T_{(\overline{\Lambda},\Lambda)}\mathbb{L}_u$ by taking $\LK$ and $\overline{\LK}$ to be (the complex spans of) the images of the projections $T_{(\overline{\Lambda},\Lambda)}\mathbb{L}\to \text{\sc{Hom}}(\overline{\Lambda},\Lambda)$ and $T_{(\overline{\Lambda},\Lambda)}\mathbb{L}\to \text{\sc{Hom}}(\Lambda,\overline{\Lambda})$, respectively. 
\end{rem}

\begin{rem}\label{homogDLCrem} The fibers $\mathsf{LaGr}(\C\Delta_u)$ of the Lagrangian-Grassmannian bundle are homogeneous for the action of the symplectic group $Sp(\C\Delta_u,\mathfrak{L}_u)\cong Sp(2n)$ (\cite{LGgeom}), and dynamical Legendrian contact structures are especially symmetric when the fibers $\mathbb{L}_u$ are foliated by distinguished curves generated by the action of $Sp(2n)$ (\cite[\S1.4.11]{CapSlovak}). When such $\mathbb{L}_u$ are complex curves ($k=1$ in Def.\ref{DLCdef} \ref{DLCLmfd}\ref{DLCLC}), $\mathbb{L}$ is completely determined by a local section $U\to\mathbb{L}$ (i.e., a choice of splitting $\C\Delta=\Lambda\oplus\overline{\Lambda}$ for $(\overline{\Lambda},\Lambda)\in\mathbb{L})$ along with nonvanishing endomorphism fields $\mathfrak{a}:\overline{\Lambda}\to\Lambda$, $\overline{\mathfrak{a}}:\Lambda\to\overline{\Lambda}$ that span $\LK$ and $\overline{\LK}$ as in Remark \ref{LconK}. In particular, there is a dynamical Legendrian contact structure naturally associated to any contact manifold $(U,\Delta)$ carrying an almost-$\varepsilon$-quaternionic structure $K$ as in \S\ref{AHCsec}, whose bundles $\LK,\overline{\LK}$ share a common fiber coordinate $a\in C^\infty(U,\C)$ with respect to which
\begin{align*}
&\LK=\{aK:\overline{\Lambda}\to\Lambda\},
&\overline{\LK}=\{\overline{a}K:\Lambda\to\overline{\Lambda}\}.
\end{align*}
\end{rem}

To compare to \cite{SykesZelenko}, note that formulating \cite[Def. 2.3]{SykesZelenko} for complex $\C U$ (the analytic continuation of real-analytic $U$) facilitates the ``recovery" process (\cite[Rem. 2.5, Prop. 2.6]{SykesZelenko}) of reconstructing 2-nondegenerate $\U$ from the dynamical Legendrian contact structure of its leaf space. Since we are not concerned with recovering 2-nondegenerate CR manifolds, we formulate Definition \ref{DLCdef} in the smooth (real) category. Still, it is instructive to analyze the definition in light of 2-nondegenerate CR geometry. Condition \ref{DLCLmfd} is automatic if $U$ is the leaf space of a 2-nondegenerate CR manifold; \ref{DLCCR} and \ref{DLCconj} ensure that the two complex submanifolds \eqref{LconjL} are appropriately non-intersecting and conjugate to one another. Item \ref{DLCLC} applies to leaf spaces of 2-nondegenerate CR manifolds whose Levi kernel has complex rank $k$ so that the Levi foliation consists of complex $k$-submanifolds. When $k=1$ so that $\U$ is foliated by complex curves, 2-nondegeneracy is equivalent to non-straightenability (higher nondegeneracy conditions come into play if $k\geq 2$; see \cite[Ch. 11]{Bao}), and this case will be our focus. 

\begin{definition}\label{Lcondef} A dynamical Legendrian contact manifold $U$ as in Definition \ref{DLCdef} is called \emph{L-contact} when 
\begin{enumerate}[label=\textbf{\arabic*.}]
\item\label{Lconcurve} $\mathbb{L}_u$ as in Def.\ref{DLCdef} \ref{DLCLmfd} has complex dimension $k=1$;

\item\label{LconSR} for each $(\overline{\Lambda},\Lambda)\in\mathbb{L}_u$ and $(\mathfrak{a},\overline{\mathfrak{a}})\in T_{(\overline{\Lambda},\Lambda)}\mathbb{L}_u\subset\Re(\text{\sc{Hom}}(\overline{\Lambda},\Lambda)\oplus\text{\sc{Hom}}(\Lambda,\overline{\Lambda}))$, 
\begin{align*}
&\mathfrak{L}(\mathfrak{a}(\overline{v}_1),\overline{\mathfrak{a}}(v_2))=\varepsilon r^2\overline{\mathfrak{L}(v_1,\overline{v}_2)}
&\forall v_1,v_2\in\Lambda,
\end{align*}
for $\varepsilon=\pm1$ and some $r\in\R$ which is zero if and only if $\mathfrak{a}=0$. 
\end{enumerate}
\end{definition}

Item \ref{LconSR} of Definition \ref{Lcondef}, called the ``conformal unitary" condition in \cite{CPCAG} or ``strongly regular" in \cite{PZ}, together with \ref{Lconcurve} situates L-contact structures within the category modeled on the homogeneous spaces presented in \S\ref{SQlinessec} and \S\ref{Qlinessec}. There, the L-contact structure arose from an $\varepsilon$-quaternionic structure; as Def. \ref{Lcondef} \ref{LconSR} and \eqref{ASQherm} suggest, an \emph{almost} $\varepsilon$-quaternionic structure can be similarly instrumental. Indeed, \cite[Cor. 4.6]{PZ} assures us that L-contact manifolds are among the favorable dynamical Legendrian contact structures described in Remark \ref{homogDLCrem}, which validates our continued application of the term \emph{split-quaternionic} to describe the case $\varepsilon=1$ of Definition \ref{Lcondef} \ref{LconSR} and \emph{quaternionic} for $\varepsilon=-1$.

\vspace{\baselineskip}

\subsection{Adapted Coframings}\label{coframingssec} 

We now construct a bundle whose sections are local coframings adapted to an L-contact structure $\mathbb{L}\to U$ as in Definition \ref{Lcondef}. At first we consider a local $\mathfrak{L}$-Lagrangian splitting $\C\Delta=\Lambda\oplus\overline{\Lambda}$ with $(\overline{\Lambda}_u,\Lambda_u)\in\mathbb{L}_u$ for each $u\in U$ -- i.e., a local section $U\to\mathbb{L}$ -- though the ambiguity associated with such a selection will eventually be incorporated into the construction. With a Lagrangian splitting we get an almost-complex structure on $\Delta$,
\begin{align}\label{LambdaJ}
&J_\Lambda:\Delta\to\Delta
&\text{given by}
&&J_\Lambda|_\Lambda=\im\mathbbm{1},
&&J_\Lambda|_{\overline{\Lambda}}=-\im\mathbbm{1},
\end{align}
and $\C\Delta=\Lambda\oplus\overline{\Lambda}$ is a partially-integrable CR structure on $U$.

Let $\beta:\B_\Lambda\to U$ be the bundle whose fiber over $u\in U$ consists of $\R$-linear isomorphisms adapted to the almost-complex structure \eqref{LambdaJ},
\begin{equation*}
\B_{\Lambda_u}=\beta^{-1}(u)=\left\{\varphi:T_uU\stackrel{\cong}{\longrightarrow} \mathbb{R}\oplus\C^n\ \left|\ 
\varphi(\Delta_u)=\C^n,\ \varphi\circ J_\Lambda=\im\varphi
\right.\right\}.
\end{equation*}
Writing $\R\oplus\C^n$ as column vectors, a section $\theta:U\to\B_\Lambda$ is a local coframing
\begin{align*}
&\begin{bmatrix}\theta^0\\\theta^i\end{bmatrix}\in\Omega^1\left(U,\begin{smallmatrix}\R\\\oplus\\\C^n\end{smallmatrix}\right),
&\theta^0\in\Gamma(\Delta^\bot),
&&\theta^i\in\Gamma(\overline{\Lambda}^\bot) \quad 1\leq i\leq n,
\end{align*}
where $\Delta^\bot\subset T^*U$ and $\overline{\Lambda}^\bot\subset\C T^*U$ denote the annihilators of $\Delta,\overline{\Lambda}$. In particular, $\theta^0$ is a contact form which determines a local matrix representation of the (symplectic) Levi form,
\begin{align}\label{Lmatrep}
&\dd\theta^0\equiv\im\ell_{ij}\theta^i\wedge\overline{\theta}^j\mod\theta^0
&\leadsto[\mathfrak{L}]=\begin{bmatrix}0&\ell_{ij}\\-\ell_{ij}&0\end{bmatrix},
&&\ell_{ij}=\overline{\ell}_{ji}\in C^\infty(\B_\Lambda,\C).
\end{align}
We reduce to the subbundle $\B_\Lambda^1\subset\B_\Lambda$ of \emph{1-adapted} coframes that ``block-diagonalize" $\mathfrak{L}$,
\begin{equation}\label{Lmatdiag}
\B^1_{\Lambda_u}=\{\varphi\in\B_{\Lambda_u}\ |\ \ell_{ij}(\varphi)=\epsilon_{ij}\ \text{ as in } \eqref{epsilonmetric}\}.
\end{equation}
This is a principal bundle $\beta:\B_\Lambda^1\to U$ with structure group
\begin{align}\label{firstGstr}
G_1=\left\{g_1=\begin{bmatrix}|c_0|^2&0\\c^i&c_0c^i_j\end{bmatrix}\in GL\left(\begin{smallmatrix}\R\\\oplus\\\C^n\end{smallmatrix}\right)\ \left|
\begin{array}{c}c_0,c^i,c^i_j\in\C \\ c_0\neq 0\\ \epsilon_{ij}c^i_k\overline{c}^j_l=\epsilon_{kl}\end{array}\right.\right\},
\end{align}
whose Lie algebra $\mathfrak{g}_1$ is matrices of the form
\begin{align}\label{firstGLA}
&\mathfrak{g}_1=\begin{bmatrix}\xi_0+\overline{\xi}_0&0\\\xi^i&\xi_0+\xi^i_j\end{bmatrix},
&\epsilon_{ik}\xi^k_j+\epsilon_{kj}\overline{\xi}^k_i=0.
\end{align}

The tautological 1-form $\lambda\in\Omega^1(\B^1_\Lambda,\R\oplus\C^n)$ is 
\begin{align}\label{lambdataut}
\lambda|_\varphi=\varphi\circ\beta_*.
\end{align}
A section $U\to\B_\Lambda^1$ given by $\theta\in\Omega^1(U,\R\oplus\C^n)$ determines a local trivialization $\B_\Lambda^1\approx G_1\times U$ with respect to which $\lambda$ can be written
\begin{align}\label{lambdagtheta}
\lambda=g_1^{-1}\beta^*\theta,
\end{align}
left-matrix-multiplication by the inverse of $g\in C^\infty(U,G_1)$ in \eqref{firstGstr} giving $\B_\Lambda^1$ a \emph{right}-principal-$G_1$ action. The $\C$-valued parameters \eqref{firstGstr} now serve as fiber coordinates on $\B_\Lambda^1$, and \eqref{lambdagtheta} reads 

\begin{align}\label{localtaut}
\begin{bmatrix}\lambda^0\\\lambda^i\end{bmatrix}=\begin{bmatrix}|c_0|^2&0\\c^i&c_0c^i_j\end{bmatrix}^{-1}\beta^*\begin{bmatrix}\theta^0\\\theta^j\end{bmatrix}.
\end{align}
Differentiating the local expression \eqref{lambdagtheta} yields 
\begin{align}\label{dlambdagtheta}
\dd\lambda&=-g_1^{-1}\dd g_1\wedge\lambda+g_1^{-1}\beta^*\dd\theta,
\end{align}
with $-g_1^{-1}\dd g_1\in\Omega^1(\B_\Lambda^1,\mathfrak{g}_1)$ completing $\lambda$ to a local coframing of $\B^1_\Lambda$ in decidedly \emph{non}-canonical fashion. To wit, with \eqref{firstGLA} and \eqref{localtaut} we rewrite \eqref{dlambdagtheta},
\begin{align}\label{firstSE}
\dd\begin{bmatrix}\lambda^0\\\lambda^i\end{bmatrix}&=-\begin{bmatrix}\xi_0+\overline{\xi}_0&0\\\xi^i&\xi_0+\xi^i_j\end{bmatrix}\wedge\begin{bmatrix}\lambda^0\\\lambda^j\end{bmatrix}+
\begin{bmatrix}\im\epsilon_{ij}\lambda^i\wedge\overline{\lambda}^j\\M^i_{jk}\lambda^j\wedge\overline{\lambda}^k+N^i_{jk}\overline{\lambda}^j\wedge\overline{\lambda}^k\end{bmatrix},
\end{align} 
and having absorbed what torsion we can into $\xi_0,\xi^i,\xi^i_j\in\Omega^1(\B_\Lambda^1,\C)$, these 1-forms are still only determined by the structure equations \eqref{firstSE} up to a transformation of the form 
\begin{align}\label{lambdaprolong}
&\begin{bmatrix}\xi_0\\\xi^i\end{bmatrix}\mapsto\begin{bmatrix}\xi_0\\\xi^i\end{bmatrix}+
\begin{bmatrix}s_0&0\\s^i&s_0\end{bmatrix}
\begin{bmatrix}\lambda^0\\\lambda^i\end{bmatrix},
&s_0,s^i\in C^\infty(\B_\Lambda^1,\C).
\end{align}

The $G_1$-structure equations \eqref{firstSE} are exactly those of the partially integrable CR structure $\C\Delta=\Lambda\oplus\overline{\Lambda}$, with the (harmonic) torsion coefficients $N^i_{jk}$ representing the Nijenhuis tensor of $\Lambda$ (see Remark \ref{Nijenhuisrem}). By pulling back an adapted coframing of $U$ along the bundle projection $\mathbb{L}\to U$, we generalize the structure equations \eqref{firstSE} to those of a general Lagrangian splitting of $\C\Delta$ in $\mathbb{L}$; i.e., an arbitrary section $U\to\mathbb{L}$. Here the equivalence problem branches based on the value of $\varepsilon=\pm1$ in Definition \ref{Lcondef} \ref{LconSR}, and is continued in \S\ref{SQSEsec} for the split-quaternionic case and \S\ref{QSEsec} for the quaternionic case. In both cases, we first pull back adapted coframings along $\mathsf{LaGr}(\C\Delta)\to U$ to an arbitrary Lagrangian splitting satisfying condition \ref{LconSR} of Definition \ref{Lcondef}, and then reduce to $\mathbb{L}$.

\vspace{\baselineskip}

\subsection{Split-Quaternionic Structure Equations}\label{SQSEsec}

Over the bundle $\beta:\B^1_\Lambda\to U$ of 1-adapted coframings of $U$, we construct $\hat{\beta}:\hat{\B}^1\to\B^1_\Lambda$ with fibers
\begin{align}\label{SQLGbundle}
\hat{\beta}^{-1}(\varphi)=\left\{\hat{\varphi}:T_\varphi\B_\Lambda^1\to\R\oplus\C^n\ \left|\  
\hat{\varphi}=\left[\begin{smallmatrix}1&0\\0&\overline{a}_0\mathbbm{1}\end{smallmatrix}\right]\varphi\circ\beta_*-\left[\begin{smallmatrix}0&0\\0&A\end{smallmatrix}\right]\overline{\varphi}\circ\beta_*\left\{ 
\begin{array}{c}
A\in\textsc{Hom}(\C^n,\C^n),\\ 
A^t=A,\\
A\overline{A}=r^2\mathbbm{1}, \\
a_0=1+\im r, r\in\R .
\end{array}
\right.\right.\right\}
\end{align}
By definition, $\beta_*\ker\hat{\varphi}|_{\C T_\varphi\B_\Lambda^1}\subset\C\Delta$ is a Lagrangian subspace framed by an $\mathfrak{L}$-``orthonormal" basis (that is, a basis so that $\mathfrak{L}$ is represented as in \eqref{Lmatrep}, \eqref{Lmatdiag}), with trivial fiber coordinates $A=0$, $r=0$ corresponding to $\hat{\varphi}=\varphi\circ\beta_*$ annihilating the original $\overline{\Lambda}\subset\C\Delta$.  

The tautological 1-form $\eta\in\Omega^1(\hat{\B}_\Lambda^1,\R\oplus\C^n)$ is
\begin{align*}
\eta|_{\hat{\varphi}}=\hat{\varphi}\circ(\hat{\beta})_*,
\end{align*}
or in terms of the tautological forms on $\B^1_\Lambda$,
\begin{align}\label{etatautlambda}
&\eta=\begin{bmatrix}\eta^0\\\eta^i\end{bmatrix},
&\eta^0=(\hat{\beta})^*\lambda^0,
&&\eta^i=(1-\im r)(\hat{\beta})^*\lambda^i-a^i_j(\hat{\beta})^*\overline{\lambda}^j,
\end{align}
for fiber coordinates
\begin{align*}
&r\in C^\infty(\hat{\B}^1), 
&a^i_j=a^j_i\in C^\infty(\hat{\B}^1,\C),
&&\epsilon_{ij}a^i_k\overline{a}^j_l=r^2\epsilon_{kl}.
\end{align*}
In particular, 
\begin{equation*}
(\hat{\beta})^*\lambda^i=(1+\im r)\eta^i+a^i_j\overline{\eta}^j,
\end{equation*}
which we plug into \eqref{firstSE} to obtain
\begin{align*}
\dd\eta^0&=-(\xi_0+\overline{\xi}_0)\wedge\eta^0+\im\epsilon_{ij}\eta^i\wedge\overline{\eta}^j,\\
(\hat{\beta})^*\dd\lambda^i&=-\xi^i\wedge\eta^0-a_0(\delta^i_k\xi_0+\xi^i_k)\wedge\eta^k-(a^i_k\xi_0+a^l_k\xi^i_l)\wedge\overline{\eta}^k\\
&+(a_0M^i_{kn}\overline{a}^n_l+N^i_{mn}\overline{a}^m_k\overline{a}^n_l)\eta^k\wedge\eta^l+(\overline{a}_0M^i_{ml}a^m_k+\overline{a}_0^2N^i_{kl})\overline{\eta}^k\wedge\overline{\eta}^l\\
&+(|a_0|^2M^i_{kl}-M^i_{mn}a^m_l\overline{a}^n_k+2\overline{a}_0N^i_{ml}\overline{a}^m_k)\eta^k\wedge\overline{\eta}^l,
\end{align*}
where we have recycled the names of the pseudoconnection forms $\xi_0=(\hat{\beta})^*\xi_0,\xi^i=(\hat{\beta})^*\xi^i,\xi^i_j=(\hat{\beta})^*\xi^i_j$. These give way to
\begin{equation}\label{splitLGSE}
\begin{aligned}
\dd\eta^0&=(\varsigma+\overline{\varsigma})\wedge\eta^0+\im\epsilon_{ij}\eta^i\wedge\overline{\eta}^j,\\
\dd\eta^i&=\zeta^i\wedge\eta^0+\varsigma\wedge\eta^i+(\delta^i_jr\dd r-\overline{a}^k_j\dd a^i_k)\wedge\eta^j-\zeta^i_j\wedge\eta^j+\kappa^i_j\wedge\overline{\eta}^j\\
&+O^i_{kl}\eta^k\wedge\eta^l+P^i_{kl}\eta^k\wedge\overline{\eta}^l+Q^i_{kl}\overline{\eta}^k\wedge\overline{\eta}^l,
\end{aligned}
\end{equation}
for 1-forms
\begin{equation}\label{SLGSEforms}
\begin{aligned}
&\varsigma=-(\xi_0+\im\dd r+r^2(\xi_0-\overline{\xi}_0)),
&&\zeta^i_j=(1+r^2)\xi^i_j-a^i_k\overline{a}^l_j\overline{\xi}^k_l,\\
&\zeta^i=-((1-\im r)\xi^i-a^i_j\overline{\xi}^j),
&&\kappa^i_j=-\im a^i_j\dd r-(1-\im r)(\dd a^i_j+a^i_j(\xi_0-\overline{\xi}_0)+(a^k_j\xi^i_k-a^i_k\overline{\xi}^k_j)),
\end{aligned}
\end{equation}
and torsion coefficients
\begin{equation}\label{SLGSEtorsion}
\begin{aligned}
O^i_{kl}&=|a_0|^2M^i_{kn}\overline{a}^n_l+\overline{a}_0N^i_{mn}\overline{a}^m_k\overline{a}^n_l-a_0a^i_j(\overline{M}^j_{ml}\overline{a}^m_k+a_0\overline{N}^j_{kl}),\\
P^i_{kl}&=\overline{a}_0(|a_0|^2M^i_{kl}-M^i_{mn}a^m_l\overline{a}^n_k+2\overline{a}_0N^i_{ml}\overline{a}^m_k)+a^i_j(|a_0|^2\overline{M}^j_{lk}-\overline{M}^j_{mn}\overline{a}^m_ka^n_l+2a_0\overline{N}^j_{mk}a^m_l),\\
Q^i_{kl}&=\overline{a}_0^2(M^i_{ml}a^m_k+\overline{a}_0N^i_{kl})-a^i_ja^n_l(\overline{a}_0\overline{M}^j_{kn}+\overline{N}^j_{mn}a^m_k).
\end{aligned}
\end{equation}

To reduce from general coframes of $\mathsf{LaGr}(\C\Delta)$ to those of $\mathbb{L}$, we construct a bundle $\B^2\subset\hat{\B}^1$ whose coframes \eqref{SQLGbundle} annihilate $\overline{\Lambda}_A=\beta_*\ker\hat{\varphi}\subset\C\Delta$ such that $(\overline{\Lambda}_A,\Lambda_A)\in\mathbb{L}$. Condition \ref{Lconcurve} of Definition \ref{Lcondef} states that the fiber coordinates $A=a^i_j$ of $\B^2$ will be functions of a single complex variable $a\in C^\infty(\B^2,\C)$. In fact, we can bring the coordinates into normal form (\cite[Rem. 1.3]{PZ}) and reduce the structure group of $\B^2$ even further: $\B^2$ is not a bundle over $\B^1_\Lambda$, but over $\B^2_\Lambda\subset\B^1_\Lambda$ whose coframes are adapted such that $\hat{\beta}:\B^2\to\B^2_\Lambda$ has fibers \eqref{SQLGbundle} with $A$ diagonalized (\cite[Cor. 4.6(1)]{PZ}),  
\begin{align}\label{Adiag}
&a^i_j=a\delta^i_j,
&a\in C^\infty(\B^2,\C),
&&r=|a|.
\end{align}
Revisiting the language of \S\ref{2nonsec}, for L-contact structures that arise as leaf spaces of 2-nondegenerate CR manifolds, the fiber coordinates $A=a^i_j$ encode the maps $\mathrm{ad}_K:\overline{\Lambda}\to\Lambda$, which are represented in the structure equations by the matrix $\kappa^i_j$ \eqref{SLGSEforms} (cf. \cite[\S2.4, \S3.2]{CPCAG}). The Hermitian version of our Levi form $\mathfrak{L}$ is obtained by complex-conjugating the second argument, so that the pair $(\mathfrak{L},A)$ can be brought into normal form by choosing appropriate bases of $(\overline{\Lambda},\Lambda)\in\mathbb{L}$ (\cite[\S4 esp. Def. 4.3]{PZ}). Even in the generalized setting of Definition \ref{Lcondef} (i.e., an L-contact structure not necessarily given by a leaf space), condition \ref{LconSR} ensures that the pair $(\mathfrak{L},A)$ is \emph{strongly non-nilpotent regular} so that its normal form specializes from the description in \cite[Thm. 4.4]{PZ} to that of \cite[Cor. 4.6(1)]{PZ}. With $A$ diagonalized, 
\begin{align*}
&\kappa^i_j=\delta^i_j\kappa\in\Omega^1(\B^2,\C)
&(\text{compare to }\eqref{splithomog2non}).
\end{align*} 
In particular, after reducing $\B^2_\Lambda\subset\B^1_\Lambda$, the forms $\xi^i_j\in\Omega^1(\B_\Lambda^1,\mathfrak{su}(p,q))$ as in \eqref{firstGLA} are no longer independent, but satisfy
\begin{align}\label{xireal}
a^k_j\xi^i_k-a^i_k\overline{\xi}^k_j=a(\xi^i_j-\overline{\xi}^i_j)\equiv0\mod\{\lambda^0,\lambda^i,\overline{\lambda}^j\},
\end{align}
and we are left with 
\begin{align*}
&\gamma^i_j=\Re\xi^i_j\in\Omega^1(\B^2_\Lambda,\mathfrak{so}(p,q))
&(\text{compare to }\eqref{splitLAmat}),
\end{align*}
along with the expansion of \eqref{xireal},
\begin{align}\label{xireduceR}
&\xi^i_j-\overline{\xi}^i_j=\im (R^i_{j0}\lambda^0+R^i_{jk}\lambda^k+\overline{R}^i_{jk}\overline{\lambda}^k)
&\text{for some}
&&R^i_{j0}\in C^\infty(\B^2_\Lambda),\ R^i_{jk}\in C^\infty(\B^2_\Lambda,\C).
\end{align}

As before, we keep the same names of the pseudoconnection forms $\xi_0=(\hat{\beta})^*\xi_0,\xi^i=(\hat{\beta})^*\xi^i,\gamma^i_j=(\hat{\beta})^*\gamma^i_j$ and torsion coefficients $M,N$ when we pull back along $\hat{\beta}:\B^2\to\B^2_\Lambda$ to get 
\begin{equation}\label{splitLGSE2}
\begin{aligned}
\dd\eta^0&=(\varsigma+\overline{\varsigma})\wedge\eta^0+\im\epsilon_{ij}\eta^i\wedge\overline{\eta}^j,\\
\dd\eta^i&=\zeta^i\wedge\eta^0+\varsigma\wedge\eta^i-\gamma^i_j\wedge\eta^j+\kappa\wedge\overline{\eta}^i+O^i_{kl}\eta^k\wedge\eta^l+P^i_{kl}\eta^k\wedge\overline{\eta}^l+Q^i_{kl}\overline{\eta}^k\wedge\overline{\eta}^l,
\end{aligned}
\end{equation}
for 1-forms
\begin{align}\label{SE2forms}
&\varsigma=-(\xi_0+\im\dd r+r^2(\xi_0-\overline{\xi}_0)+\tfrac{1}{2}(\overline{a}\dd a-a\dd\overline{a})),
&\zeta^i=-((1-\im r)\xi^i-a\overline{\xi}^i+\im R^i_{j0}(\tfrac{1}{2}(1+2r^2)\eta^j+a\overline{\eta}^j)),
\end{align}
and torsion coefficients
\begin{equation}\label{SE2torsion}
\begin{aligned}
O^i_{kl}&=-\tfrac{\im}{2}(1+2r^2)(a_0R^i_{kl}+\overline{a}\overline{R}^i_{kl})+
\overline{a}|a_0|^2M^i_{kl}+\overline{a}_0\overline{a}^2N^i_{kl}-a_0r^2\overline{M}^i_{kl}-aa_0^2\overline{N}^i_{kl},\\
P^i_{kl}&=-\tfrac{\im}{2}(1+2r^2)(\overline{a}_0\overline{R}^i_{kl}+aR^i_{kl})+\overline{a}_0|a_0|^2M^i_{kl}-\overline{a}_0r^2M^i_{lk}+2\overline{a}\overline{a}_0^2N^i_{kl}\\
&+a|a_0|^2\overline{M}^i_{lk}-ar^2\overline{M}^i_{kl}+2a_0a^2\overline{N}^i_{lk}-\im a(a_0R^i_{kl}+\overline{a}\overline{R}^i_{kl}),\\
Q^i_{kl}&=a\overline{a}_0^2M^i_{kl}+\overline{a}_0^3N^i_{kl}-\overline{a}_0a^2\overline{M}^i_{kl}-a^3\overline{N}^i_{kl}
-\im a(\overline{a}_0\overline{R}^i_{kl}+aR^i_{kl}).
\end{aligned}
\end{equation}

As in \S\ref{coframingssec} (see \eqref{lambdaprolong}), the pseudoconnection 1-forms $\varsigma,\zeta^i,\kappa\in\Omega^1(\B^2,\C)$ are not uniquely determined by the structure equations \eqref{splitLGSE2}, but only up to a substitution
\begin{align}\label{etaprolong}
&\begin{bmatrix}\varsigma\\\zeta^i\\\kappa\end{bmatrix}\mapsto\begin{bmatrix}\varsigma\\\zeta^i\\\kappa\end{bmatrix}+
\begin{bmatrix}s_0&0&0\\s^i&s_0&s_1\\s_1&0&0\end{bmatrix}
\begin{bmatrix}\eta^0\\\eta^i\\\overline{\eta}^i\end{bmatrix},
&s_0,s^i,s_1\in C^\infty(\B^2,\C).
\end{align}
To complete the construction of an absolute parallelism over $U$ via Cartan's method of equivalence, one would pull back the structure equations \eqref{splitLGSE2} to the bundle of all such pseudoconnection forms \eqref{etaprolong} over $\B^2$ and differentiate them with the help of the identity $\dd^2=0$ to determine the exterior derivatives of the tautological forms on the prolonged bundle (\cite[\S3.5]{CPCAG}). Then, normalizing torsion in the new structure equations (\cite[\S\S 3.4-3.5]{CPCAG}) would eventually reduce the fiber coordinates \eqref{etaprolong} to a single, $\R$-valued function $s_0$ whose corresponding 1-form in the structure equations \eqref{splitMCeq} of the homogeneous model was denoted $\zeta^0$. 

\begin{rem}\label{Lcontorrem}
We have no need to continue with the prolongation procedure, which was shown to terminate as expected in \cite{PZ} in the 2-nondegenerate CR case (and this carries over to the general L-contact case by \cite{SykesZelenko}). Indeed, the structure equations \eqref{splitLGSE2} are already in their final form, and are sufficient to compare to the homogeneous model \eqref{splitMCeq} for our purposes. Namely, in order that \eqref{splitLGSE2} describes an L-contact structure which is locally equivalent to the homogeneous model, it is necessary that the torsion coefficients \eqref{SE2torsion} are zero.
\end{rem}

\vspace{\baselineskip}

\subsection{Quaternionic Structure Equations}\label{QSEsec} 

We carry out the same sequence of constructions as in \S\ref{SQSEsec}, adjusting the structure equations as needed. To begin, we note that the Levi form \eqref{Lmatrep}, \eqref{Lmatdiag} on $\B^1_\Lambda$ is normalized to
\begin{align}\label{Qdlambda0}
&\dd\lambda^0=-(\xi_0+\overline{\xi}_0)\wedge\lambda^0+\im\delta_{ij}\lambda^i\wedge\overline{\lambda}^j-\im\delta_{ij}\lambda^{p+i}\wedge\lambda^{p+j}
&(\text{cf. \S\ref{Qlinessec}}),
\end{align} 
and in this section indices range from 1 to $p=\tfrac{1}{2}n$. The bundle $\hat{\beta}:\hat{\B}^1\to\B^1_\Lambda$ has fibers 
\begin{align*}
\hat{\beta}^{-1}(\varphi)=\left\{\hat{\varphi}:T_\varphi\B_\Lambda^1\to\R\oplus\C^n\ \left|\  
(1+r^2)\hat{\varphi}=\varphi\circ\beta_*
-\left[\begin{smallmatrix}0&0&0\\0&0&-A\\0&A&0\end{smallmatrix}\right]\overline{\varphi}\circ\beta_*\left\{ 
\begin{array}{c}
A\in\textsc{Hom}(\C^p,\C^p),\\ 
A^t=A,\\
A\overline{A}=r^2\mathbbm{1}, r\in\R .
\end{array}
\right.\right.\right\},
\end{align*}
and tautological forms
\begin{align}\label{QSEeta}
&\eta^0=\tfrac{1}{1+r^2}\hat{\beta}^*\lambda^0,
&\eta^i=\tfrac{1}{1+r^2}(\hat{\beta}^*\lambda^i+a^i_j\hat{\beta}^*\overline{\lambda}^{p+j}),
&&\eta^{p+i}=\tfrac{1}{1+r^2}(\hat{\beta}^*\lambda^{p+i}-a^i_j\hat{\beta}^*\overline{\lambda}^{j}),
\end{align}
for fiber coordinates
\begin{align*}
&r\in C^\infty(\hat{\B}^1), 
&a^i_j=a^j_i\in C^\infty(\hat{\B}^1,\C),
&&\delta_{ij}a^i_k\overline{a}^j_l=r^2\delta_{jl}.
\end{align*}
In particular,
\begin{align*}
&\hat{\beta}^*\lambda^0=(1+r^2)\eta^0,
&\hat{\beta}^*\lambda^i=\eta^i-a^i_j\overline{\eta}^{p+j},
&&\hat{\beta}^*\lambda^{p+i}=\eta^{p+i}+a^i_j\overline{\eta}^{j},
\end{align*}
and we pull back \eqref{firstSE} to get
\begin{align*}
\dd\eta^0&=(\varsigma+\overline{\varsigma})\wedge\eta^0+\im\delta_{ij}\eta^i\wedge\overline{\eta}^j-\im\delta_{ij}\eta^{p+i}\wedge\eta^{p+j},\\
\dd\eta^i&\equiv\zeta^i\wedge\eta^0+\varsigma\wedge\eta^i+\tfrac{1}{2(1+r^2)}(\overline{a}^k_j\dd a^i_k-a^i_k\dd\overline{a}^k_j)\wedge\eta^j\\
&-\zeta^i_j\wedge\eta^j-\zeta^i_{p+j}\wedge\eta^{p+j}+\kappa^i_j\wedge\overline{\eta}^j+\kappa^i_{p+j}\wedge\overline{\eta}^{p+j}
&\mod\{\eta\wedge\eta,\ \eta\wedge\overline{\eta},\ \overline{\eta}\wedge\overline{\eta}\},\\
\dd\eta^{p+i}&\equiv\zeta^{p+i}\wedge\eta^0+\varsigma\wedge\eta^{p+i}+\tfrac{1}{2(1+r^2)}(\overline{a}^k_j\dd a^i_k-a^i_k\dd\overline{a}^k_j)\wedge\eta^{p+j}\\
&-\zeta^{p+i}_j\wedge\eta^j-\zeta^{p+i}_{p+j}\wedge\eta^{p+j}+\kappa^{p+i}_j\wedge\overline{\eta}^j+\kappa^{p+i}_{p+j}\wedge\overline{\eta}^{p+j}
&\mod\{\eta\wedge\eta,\ \eta\wedge\overline{\eta},\ \overline{\eta}\wedge\overline{\eta}\},
\end{align*}
(having reduced modulo torsion terms), where
\begin{align*}
&\varsigma=-\left(r\dd r+\tfrac{1}{1+r^2}(\xi_0+r^2\overline{\xi}_0)\right),
&\zeta^i=-(\xi^i+a^i_j\overline{\xi}^{p+j}),
&&\zeta^{p+i}=-(\xi^{p+i}-a^i_j\overline{\xi}^j),
\end{align*}
$\xi^i_j\in\Omega^1(\B^1_\Lambda\mathfrak{su}(p,p))$ have pulled back to
\begin{equation*}
\begin{bmatrix}\zeta^i_j&\zeta^i_{p+j}\\\zeta^{p+i}_j&\zeta^{p+i}_{p_j}\end{bmatrix}=
\frac{1}{1+r^2}\begin{bmatrix}
\xi^i_j+a^i_k\overline{a}^l_j\overline{\xi}^{p+k}_{p+l}&
\xi^i_{p+j}-a^i_k\overline{a}^l_j\overline{\xi}^{p+k}_l\\
\xi^{p+i}_j-a^i_k\overline{a}^l_j\overline{\xi}^k_{p+l}&
\xi^{p+i}_{p+k}+a^i_j\overline{a}^l_k\overline{\xi}^j_l
\end{bmatrix},
\end{equation*}
and the ``$\mathrm{ad}_K$" maps are represented
\begin{align}\label{Qkappa}
\begin{bmatrix}\kappa^i_j&\kappa^i_{p+j}\\\kappa^{p+i}_j&\kappa^{p+i}_{p+j}\end{bmatrix}
=
\frac{1}{1+r^2}
\begin{bmatrix}
-a^l_k\xi^i_{p+l}-a^i_j\overline{\xi}^{p+j}_k&
\dd a^i_j+a^i_k(\xi_0-\overline{\xi}_0)+a^l_k\xi^i_l-a^i_j\overline{\xi}^{p+j}_{p+k}\\
-\dd a^i_j-a^i_k(\xi_0-\overline{\xi}_0)-a^l_k\xi^{p+i}_{p+l}+a^i_j\overline{\xi}^j_k&
a^l_k\xi^{p+i}_l+a^i_j\overline{\xi}^j_{p+k}
\end{bmatrix}.
\end{align}

As in \S\ref{SQSEsec}, we invoke \cite[Cor. 4.6(2)]{PZ} to reduce to $\B^2\subset\hat{\B}^1$ over $\B^2_\Lambda\subset\B^1_\Lambda$ consisting of coframes of $\mathbb{L}$ such that the fibers of $\B^2\to\B^2_\Lambda$ have $A$ diagonalized \eqref{Adiag}. With \eqref{Qkappa} normalized as in \eqref{quathomog2non} (modulo torsion terms), the pseudoconnection forms $\xi\in\Omega^1(\B^1_\Lambda,\mathfrak{su}(p,p))$ are no longer independent over $\B^2_\Lambda$, but satisfy \eqref{QLAmat} (modulo tautological forms). Modulo torsion terms, the final structure equations on $\B^2$ are equivalent to \eqref{quatMCeq}, with 
\begin{align*}
&\varsigma=-\left(r\dd r+\tfrac{1}{1+r^2}(\xi_0+r^2\overline{\xi}_0)\right)+\tfrac{1}{2(1+r^2)}(\overline{a}\dd a-a\dd\overline{a}).
\end{align*}
Here again, the pseudoconnection forms are not uniquely determined by the structure equations, but require prolongation before they may be canonically defined by higher order structure equations (see the discussion at the end of \S\ref{SQSEsec}).

\vspace{\baselineskip}


\section{L-contact Structure on Unit Tangent Bundles}\label{UMLcon}


\subsection{The Unit Tangent Bundle of a Semi-Riemannian Manifold}\label{UTBsec}


Let $M$ be a smooth manifold and $\mu:TM\to M$ its tangent bundle; $\mu(y)=x$ $\forall y\in T_xM$. The pushforward $\mu_*:TTM\to TM$ annihilates the vertical bundle $T^{\uparrow}M=\ker\mu_*$, which is naturally identified with $TM$ as follows. For $f\in C^\infty(M)$, $\dd f\in\Omega^1(M)\subset C^\infty(TM)$ is regarded as a function on $TM$, linear on the fibers, and each $Y\in\Gamma(TM)$ determines $Y^\uparrow\in\Gamma(T^\uparrow M)$ by their actions as derivations,
\begin{align*}
Y(f)=Y^\uparrow(\dd f).
\end{align*} 

Now suppose $M$ is equipped with an affine connection $\nabla:\Gamma(TM)\to\Gamma(T^*M\otimes TM)$. Each $y\in T_xM$ can be locally extended to $Y\in\Gamma(TM)$ so that $\nabla_XY=0$ $\forall X\in\Gamma(TM)$, which uniquely determines the 1-jet of $Y:M\to TM$ at $x\in M$. The pushforward $Y_*:TM\to TTM$, satisfies $\mu_*\circ Y_*=\mathbbm{1}$, hence $Y_*(T_xM)\subset T_yT_xM$ defines the horizontal subbundle $\overrightarrow{T}M\subset TTM$ which is complementary to $T^\uparrow M$ and canonically isomorphic to $TM$ via $\mu_*$; i.e., each $Y\in\Gamma(TM)$ determines $\overrightarrow{Y}\in\Gamma(\overrightarrow{T}M)$ (\cite[\S1.3.1]{CapSlovak}). The endomorphism field $J:\overrightarrow{T}M\oplus T^\uparrow M\to \overrightarrow{T}M\oplus T^\uparrow M$ on $TM$ will satisfy $J^2=-\mathbbm{1}$ if we set
\begin{align}\label{TMaCstr}
&J(\overrightarrow{Y})=Y^\uparrow,
&J(Y^\uparrow)=-\overrightarrow{Y},
&&\forall Y\in TM.
\end{align}
Similarly, $K:TTM\to TTM$ with $K^2=\mathbbm{1}$ and $JK=-KJ$ is given by 
\begin{align}\label{TMaSQstr}
&K(\overrightarrow{Y})=Y^\uparrow,
&K(Y^\uparrow)=\overrightarrow{Y},
&&\forall Y\in TM.
\end{align}

Now suppose $(M,g)$ is semi-Riemannian and $\nabla$ is the Levi-Civita connection of $g\in S^2T^*M$. The Sasaki metric $\hat{g}\in S^2T^*TM$ is defined by
\begin{align*}
&\hat{g}(\overrightarrow{X},\overrightarrow{Y})=\hat{g}(X^\uparrow,Y^\uparrow)=g(X,Y),
&\hat{g}(\overrightarrow{X},Y^\uparrow)=0,
&&\forall X,Y\in TM,
\end{align*}
from which it follows immediately
\begin{align}\label{gJ}
&\hat{g}(J\hat{X},J\hat{Y})=\hat{g}(\hat{X},\hat{Y})
&\Rightarrow \hat{g}(J\hat{X},\hat{Y})+\hat{g}(\hat{X},J\hat{Y})=0
&&\forall\hat{X},\hat{Y}\in TTM.
\end{align}
On the other hand,
\begin{align}\label{gK}
&\hat{g}(K\hat{X},\hat{Y})=\hat{g}(\hat{X},K\hat{Y})
&\Rightarrow \hat{g}(K\hat{X},J\hat{Y})+\hat{g}(\hat{X},JK\hat{Y})=0
&&\forall\hat{X},\hat{Y}\in TTM.
\end{align}
We depict the triple $\hat{g},J,K$ schematically as
\begin{align}\label{hatgJKmat}
&\hat{g}=\begin{bmatrix}g&0\\0&g\end{bmatrix},
&J=\begin{bmatrix}0&-\mathbbm{1}\\\mathbbm{1}&0\end{bmatrix},
&&K=\begin{bmatrix}0&\mathbbm{1}\\\mathbbm{1}&0\end{bmatrix}
&&\text{on  }TTM=\begin{array}{c}\overrightarrow{T}M\\\oplus\\T^\uparrow M\end{array}.
\end{align}

If we suppose further that $g$ has at least one positive eigenvalue, we can consider the unit tangent bundle \eqref{UMdef} of $M$. For each $u\in U_xM$, $u^\bot=\ker g(u,\cdot)\subset T_xM$ is a hyperplane implicitly identified with $T_u(U_xM)$ (as one would for a sphere in Euclidean space). Explicitly, $T_uU_xM=\overrightarrow{T_xM}\oplus (u^\bot)^\uparrow\subset \overrightarrow{T}M\oplus T^\uparrow M$ and we name the corank-1 distribution of $TUM$,
\begin{equation}\label{Deltasplitdef}
\Delta_u=\overrightarrow{(u^\bot)}\oplus(u^\bot)^\uparrow\subset T_uUM.
\end{equation}
Equivalently, $\Delta=\ker\theta$ for the contact form $\theta\in\Omega^1(UM)$ given by
\begin{align}\label{thetadef}
\theta|_u=\hat{g}(\overrightarrow{u},\cdot).
\end{align}
The endomorphism field $J$ on $TM$ restricts to an almost-complex structure on $\Delta$, whose complexification splits 
\begin{equation}\label{UMLambdasplit}
\C\Delta=\Lambda\oplus\overline{\Lambda}
\end{equation}
into $\im$ and $-\im$ eigenspaces of $J$, respectively. 

\begin{rem}\label{Jintrem}
The splitting \eqref{UMLambdasplit} is the \emph{standard CR structure} of $UM$ (\cite{Tanno}), in spite of the fact that it's rarely CR-integrable (see Remark \ref{Nijenhuisrem}). Indeed, the almost-complex structure \eqref{TMaCstr} of $TM$ is integrable if and only if $M$ is flat as a semi-Riemannian manifold (\cite{Hsu}\footnote{\cite{Hsu} attributes this to \cite{Nagano}, but I was unable to locate a copy of Nagano's article. Note that these sources only explicitly refer to the Riemannian (definite-signature) case.}, \cite{Dombrowski}), so for $\dim M>2$ the Nijenhuis tensor of $\Lambda$ only vanishes for highly symmetrical $M$, e.g. Riemannian space forms of sectional curvature 1 (\cite{BarDrag}).
\end{rem}

By definition \eqref{TMaCstr},
\begin{align}\label{UMLambda}
&\Lambda_u=\C\left\{\left.\overrightarrow{Y}-\im Y^\uparrow\ \right|\ Y\in u^\bot\right\},
&\overline{\Lambda}_u=\C\left\{\left.\overrightarrow{Y}+\im Y^\uparrow\ \right|\ Y\in u^\bot\right\},
\end{align}
and \eqref{hatgJKmat} restricts to 
\begin{align}\label{hatgJKmatLambda}
&\hat{g}=\begin{bmatrix}0&2g|_{u^\bot}\\2g|_{u^\bot}&0\end{bmatrix},
&J=\begin{bmatrix}\im\mathbbm{1}&0\\0&-\im\mathbbm{1}\end{bmatrix},
&&K=\begin{bmatrix}0&\im\mathbbm{1}\\-\im\mathbbm{1}&0\end{bmatrix}
&&\text{on  }\C\Delta=\begin{array}{c}\Lambda_u\\\oplus\\\overline{\Lambda}_u\end{array}.
\end{align}

The Levi form of $UM$ is
\begin{align*}
&\mathfrak{L}(\hat{Y}_1,\hat{Y}_2)=-\im\dd\theta(\hat{Y}_1,\hat{Y}_2)
&\hat{Y}_1,\hat{Y}_2\in\Gamma(\C\Delta).
\end{align*}

\begin{lem}\label{LeviSasakilem} 
$\mathfrak{L}(\hat{Y}_1,\hat{Y}_2)=\tfrac{1}{2}\hat{g}(\hat{Y}_1,J\hat{Y}_2)$\hspace{1cm}  $\forall \hat{Y}_1,\hat{Y}_2\in\Gamma(\C\Delta)$. 
\end{lem}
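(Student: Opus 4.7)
The plan is a direct local computation on the horizontal–vertical splitting $TTM=\overrightarrow{TM}\oplus T^\uparrow M$ provided by the Levi-Civita connection. Since both sides of the claimed identity extend $\C$-bilinearly from the real distribution $\Delta\subset TUM$, it suffices to verify it on pairs of real vector fields of the form $\overrightarrow{X}, Y^\uparrow$ where $X,Y$ are local vector fields on $M$ with values in $u^\bot$, which frame $\Delta_u$ by \eqref{Deltasplitdef}. I would then compute $\dd\theta$ using Cartan's formula $\dd\theta(V,W)=V\theta(W)-W\theta(V)-\theta([V,W])$ and the defining formula $\theta|_u=\hat{g}(\overrightarrow{u},\cdot)$, which gives $\theta(\overrightarrow{Z})=g(u,Z)$ as a fiber-linear function on $TM$ and $\theta(Z^\uparrow)=0$.

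The three cases proceed as follows. For $\dd\theta(\overrightarrow{X},\overrightarrow{Y})$, the horizontal derivative $\overrightarrow{X}(g(u,Y))$ equals $g(u,\nabla_XY)$ by parallel transport and metric compatibility, and similarly with $X,Y$ swapped. The bracket decomposes as $[\overrightarrow{X},\overrightarrow{Y}]=\overrightarrow{[X,Y]}-(R(X,Y)u)^\uparrow$; the vertical curvature term is annihilated by $\theta$, leaving $\theta([\overrightarrow{X},\overrightarrow{Y}])=g(u,[X,Y])$, and the sum vanishes by torsion-freeness of $\nabla$. For the mixed case $\dd\theta(\overrightarrow{X},Y^\uparrow)$, only the middle term of Cartan's formula survives: the vertical derivative of the fiber-linear function $g(u,X)$ is $g(Y,X)$, and the bracket $[\overrightarrow{X},Y^\uparrow]=(\nabla_XY)^\uparrow$ is vertical, hence killed by $\theta$. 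This produces the essential horizontal–vertical coupling proportional to $g(X,Y)$. Finally, $\dd\theta(X^\uparrow,Y^\uparrow)=0$ since the vertical distribution is integrable and $\theta$ vanishes on it.

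I would then compute $\hat{g}(V,JW)$ on the same three types of pairs using \eqref{TMaCstr} and the block form \eqref{hatgJKmat}: $J$ interchanges $\overrightarrow{TM}$ and $T^\uparrow M$ with a sign, so $\hat{g}(\overrightarrow{X},J\overrightarrow{Y})=\hat{g}(\overrightarrow{X},Y^\uparrow)=0$, $\hat{g}(X^\uparrow,JY^\uparrow)=-\hat{g}(X^\uparrow,\overrightarrow{Y})=0$, and only the mixed pair $\hat{g}(\overrightarrow{X},JY^\uparrow)=-g(X,Y)$ is nonzero. Matching this against the $\dd\theta$ computation and applying the factor $-\im$ from $\mathfrak{L}=-\im\dd\theta$ (together with the extension of both sides $\C$-bilinearly to $\C\Delta=\Lambda\oplus\overline{\Lambda}$) yields the lemma's identity; the overall factor of $\tfrac{1}{2}$ is recovered from the conventions embedded in \eqref{hatgJKmatLambda} for the pairing of $\Lambda$- and $\overline{\Lambda}$-elements.

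The main obstacle is the horizontal–horizontal case, which is where one might expect a curvature contribution—and indeed the curvature of $M$ enters through the bracket $[\overrightarrow{X},\overrightarrow{Y}]$—but the contact form $\theta$ crucially annihilates its vertical part, so the identity holds with no hypothesis on the curvature of $(M,g)$. The remainder of the argument is bookkeeping with signs, Sasaki-metric conventions, and the factor coming from the identification of $\Lambda,\overline{\Lambda}$ with $u^\bot\otimes\C$ via \eqref{UMLambda}.
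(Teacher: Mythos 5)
Your route is genuinely different from the paper's. The paper does not verify the identity on $TM$ at all: its proof is deferred to \S\ref{ONFBsec}, where $\theta$ is pulled back to the orthonormal frame bundle via \eqref{omega0contact}, the first Levi-Civita structure equation \eqref{domegai} is rewritten in the complex coframe \eqref{Flambda}, and the resulting equation \eqref{Fdlambda0}, $\dd\lambda^0=\im\epsilon_{ij}\lambda^i\wedge\overline{\lambda}^j$, exhibits the Levi form as the orthonormal pairing of $\Lambda$ against $\overline{\Lambda}$, which is $\tfrac{1}{2}\hat{g}(\cdot,J\cdot)$ by \eqref{hatgJKmatLambda}. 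You instead compute $\dd\theta$ directly on the horizontal--vertical splitting using Cartan's formula and the Dombrowski bracket identities $[\overrightarrow{X},\overrightarrow{Y}]=\overrightarrow{[X,Y]}-(R(X,Y)u)^\uparrow$, $[\overrightarrow{X},Y^\uparrow]=(\nabla_XY)^\uparrow$, $[X^\uparrow,Y^\uparrow]=0$. Your three case computations are correct (note the fourth pairing $\hat{g}(X^\uparrow,J\overrightarrow{Y})=g(X,Y)$ is covered by the skew-symmetry coming from \eqref{gJ}, matching $\dd\theta(X^\uparrow,\overrightarrow{Y})=g(X,Y)$), and your observation that the curvature term in the horizontal--horizontal bracket is annihilated by $\theta$ is exactly the right point: the identity needs no curvature hypothesis. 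Your approach is more elementary and self-contained; the paper's approach has the advantage that it is the same frame-bundle computation it needs anyway for the torsion normalizations \eqref{UMSEformstorsion}, so all sign and scaling conventions stay synchronized with \S\ref{SQSEsec}. One caveat about tensoriality: a vector field $X$ on $M$ cannot take values in $u^\bot$ for all $u$ in a fiber, so $\overrightarrow{X},Y^\uparrow$ are not sections of $\Delta$; your computation is nonetheless valid because $\dd\theta$ is tensorial and you evaluate at points where $X,Y\in u^\bot$, but this should be said explicitly, since the second Cartan term $Y^\uparrow\theta(\overrightarrow{X})=g(Y,X)$ survives precisely because $\theta(\overrightarrow{X})$ does not vanish identically.

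The one step that does not work as written is the final constant reconciliation. Your computation yields $\dd\theta(\hat{Y}_1,\hat{Y}_2)=c\,\hat{g}(\hat{Y}_1,J\hat{Y}_2)$ on $\C\Delta$ for a \emph{real} constant $c$ depending on the wedge convention, whence $\mathfrak{L}=-\im\dd\theta=-\im c\,\hat{g}(\cdot,J\cdot)$. The claim that ``applying the factor $-\im$ \dots yields the lemma's identity'' cannot be literally correct: both sides of the lemma are $\C$-bilinear extensions of tensors on $\Delta$, and on real pairs your left side is purely imaginary while $\tfrac{1}{2}\hat{g}(\cdot,J\cdot)$ is real, so no amount of bookkeeping with \eqref{hatgJKmatLambda} absorbs the $\im$. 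The resolution is the one the paper's own proof supplies: $\mathfrak{L}$ is only defined up to the choice of local trivialization $\C(TUM/\Delta)\approx\C$, here provided by $\theta$, so the identity is one of proportionality with the constant fixed by convention. Your argument proves the substantive content of the lemma---that the Levi form of $UM$ is, up to a universal constant, the Sasaki metric composed with $J$ (equivalently, the contact metric is homothetic to $\hat{g}$)---but you should invoke the trivialization freedom, not $\C$-bilinear extension, to dispose of the factor.
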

\begin{proof}
This follows from \eqref{gJ}, and will be shown in \S\ref{ONFBsec}. Here we wish to acknowledge the fact that the contact metric on $UM$ is homothetic to the Sasaki metric; see \cite{BlairContact}, keeping in mind that \eqref{thetadef} is equivalent to $\hat{g}(u^\uparrow,J\cdot)$, and the Levi form is only defined up to a choice of local trivialization $\C(TUM/\Delta)\approx\C$, provided here by $\theta$.
\end{proof}

In light of Lemma \ref{LeviSasakilem} and \eqref{gJ}, \eqref{gK}, we offer the following

\begin{definition}\label{stdSQ}
On $UM$ with its contact distribution \eqref{Deltasplitdef}, the endomorphisms $J,K:\Delta\to\Delta$ defined by restriction of \eqref{TMaCstr} and \eqref{TMaSQstr} define the \emph{standard split-quaternionic structure} of the unit tangent bundle of $M$. The corresponding L-contact structure (see Remark \ref{homogDLCrem}) is also called \emph{standard} (cf. Remark \ref{Jintrem}).
\end{definition}

The homogeneous leaf space $U$ presented as split-quaternionic lines in \S\ref{SQlinessec} can be locally realized as the unit tangent bundle $UM$ for $M=\R^{p+1,q}$ (see Appendix \ref{appendixsec}) with its standard L-contact structure. Here, $\R^{p+1,q}$ is semi-Euclidean space with its diagonalized (flat) metric of signature $(p+1,q)$. An L-contact manifold is ``non-flat" if it is not equivalent to the homogeneous model (see Remark \ref{Lcontorrem}), which is measured at lowest order by torsion terms in the structure equations \eqref{splitLGSE2}. Similarly, $M$ is non-flat if it is not locally equivalent to $\R^{p+1,q}$, which is measured by the Riemann curvature tensor of $M$. In order to relate these two notions of curvature (``non-flatness") in \S\ref{ONFBsec}, let us establish some notation. 

The Riemann curvature tensor of $g$ is
\begin{align*}
R\in\Omega^2(M,\mathfrak{so}(TM)),
\end{align*}
so for $X,Y\in T_xM$,
\begin{align}\label{curvop}
&R(X,Y):T_xM\to T_xM
&\text{satisfies}
&&g(R(X,Y)y_1,y_2)+g(y_1,R(X,Y)y_2)=0,
&&\forall y_1,y_2\in T_x M.
\end{align}
In particular, if we set $y_1=y_2=u$, we conclude that $R(\cdot,\cdot)u:\wedge^2(u^\bot)\to u^\bot$ is well-defined, hence we can extend by conjugate-bilinearity and lift to
\begin{align}\label{RNijen}
R(\cdot,\cdot)\overrightarrow{u}:\Lambda_u\wedge\Lambda_u\to\Lambda_u.
\end{align}
Compare \eqref{RNijen} to the Nijenhuis tensor discussed in Remark \ref{Nijenhuisrem}.

Contracting $R$ with $g$ yields the Ricci tensor $\mathrm{Ric}\in S^2T^*M$, which we use to construct the \emph{Ricci-shift operator},
\begin{align}\label{Ricshift}
&\overrightarrow{X}\mapsto \overrightarrow{X}+\mathrm{Ric}(u,X)X^\uparrow,
&X^\uparrow\mapsto X^\uparrow+ \mathrm{Ric}(u,X)\overrightarrow{X}
&&\forall u,X\in T_xM\leadsto \overrightarrow{X},X^\uparrow\in T_u UM.
\end{align}
\begin{definition}\label{RicSQ}
On $UM$ with its contact distribution \eqref{Deltasplitdef}, the \emph{Ricci-shifted L-contact structure} is defined by application of the Ricci shift operator \eqref{Ricshift} to each Lagrangian subspace in the standard L-contact structure (Definition \ref{stdSQ}),
\end{definition}

\noindent Note that if $M$ is Ricci-flat, the Ricci-shifted L-contact structure coincides with the standard one.

\vspace{\baselineskip}

\subsection{The Orthonormal Frame Bundle}\label{ONFBsec}

Let $W=\R^{n+1}$ with its standard basis of column vectors $e_0,\dots,e_n\in W$. Indices written in {\tt typewriter font} range from $0$ to $n$ while those in standard font start counting at 1. The metric $\langle\cdot,\cdot\rangle\in S^2W^*$ of signature $(p+1,q)$ is 
\begin{equation*}
\langle e_\mathtt{i},e_\mathtt{j}\rangle=\epsilon_{\mathtt{ij}}
\text{ as in }\eqref{epsilonmetric}, \text{ adding }\epsilon_0=1.
\end{equation*}
$(M,g)$ is a semi-Riemannian manifold of signature $(p+1,q)$, $p+q=n$. Over each $x\in M$, the orthonormal frame bundle $\pi:\F\to M$ has fiber
\begin{align*}
\pi^{-1}(x)=\{\phi:W\to T_x M\ |\ \langle w_1,w_2\rangle=g(\phi(w_1),\phi(w_2))\ \forall w_1,w_2\in W\},
\end{align*}
which carries a right principal action of the orthogonal group $O(W,\langle\cdot,\cdot\rangle)=O(p+1,q)$ given by composing $\phi$ with orthogonal transformations of $W$. The tautological 1-form $\omega\in\Omega^1(\F,W)$ is
\begin{equation}\label{omegadef}
\omega|_\phi=\phi^{-1}\circ\pi_*,
\end{equation}
and the Levi-Civita connection form $\gamma\in\Omega^1(\F,\mathfrak{so}(W,\langle\cdot,\cdot\rangle))$ is determined by the torsion-free structure equation
\begin{equation}\label{domega}
\dd\omega=-\gamma\wedge\omega.
\end{equation}
With respect to the standard basis of $W$, $\omega=\omega^\mathtt{i}\otimes e_\mathtt{i}$ for $\omega^\mathtt{i}\in\Omega^1(\F)$ so that 
\begin{equation}\label{Fliftg}
\pi^*g=\epsilon_{\mathtt{ij}}\omega^\mathtt{i}\otimes\omega^\mathtt{j},
\end{equation}
and \eqref{domega} becomes
\begin{align*}
&\dd\omega^\mathtt{i}=-\gamma^\mathtt{i}_\mathtt{j}\wedge\omega^\mathtt{j},
&\epsilon_\mathtt{i}\gamma^\mathtt{i}_\mathtt{j}+\epsilon_\mathtt{j}\gamma^\mathtt{j}_\mathtt{i}=0,
\end{align*}
where the latter holds for each fixed $\mathtt{i},\mathtt{j}$ (not summed over), reflecting the fact that $\gamma^\mathtt{i}_\mathtt{j}$ is $\mathfrak{so}(p+1,q)$-valued. It will be convenient to introduce $\tilde{\gamma}^\mathtt{i}_\mathtt{j}$ taking values in $\mathfrak{so}(p+q+1)$ via
\begin{align}\label{domegai}
&\dd\omega^\mathtt{i}=-\epsilon^\mathtt{j}_\mathtt{k}\tilde{\gamma}^\mathtt{i}_\mathtt{j}\wedge\omega^\mathtt{k},
&\epsilon^\mathtt{j}_\mathtt{k}=\epsilon_{\mathtt{jk}},
&&\tilde{\gamma}^\mathtt{i}_\mathtt{j}+\tilde{\gamma}^\mathtt{j}_\mathtt{i}=0.
\end{align}
The rest of the semi-Riemannian structure equations read
\begin{align}\label{dLeviCiv}
\dd\gamma^\mathtt{i}_\mathtt{j}&=-\gamma^\mathtt{i}_\mathtt{k}\wedge\gamma^\mathtt{k}_\mathtt{j}+\tfrac{1}{2}R^\mathtt{i}_{\mathtt{jkl}}\omega^\mathtt{k}\wedge\omega^\mathtt{l}. 
\end{align}

For each 1-form $\omega^\mathtt{i},\gamma^\mathtt{i}_\mathtt{j}\in\Omega^1(\F)$ in the coframing of $\F$, $\partial_{\omega^\mathtt{i}},\partial_{\gamma^\mathtt{i}_\mathtt{j}}\in\Gamma(T\F)$ will denote their dual vector fields. In particular, by definition \eqref{omegadef},
\begin{equation}\label{tautdual}
\omega(\partial_{\omega^\mathtt{i}})=e_\mathtt{i}.
\end{equation}
$\F$ fibers over the unit tangent bundle \eqref{UMdef} via the projection $\pi_0:\F\to UM$ mapping each frame to its first basis vector,
\begin{align*}
\pi_0(\phi)=\phi(e_0)\in U_xM.
\end{align*}
In light of \eqref{tautdual}, $\pi_0(\phi)=\pi_*\partial_{\omega^0}$, and since $\pi=\mu\circ\pi_0$ for the basepoint projection $\mu:TM\to M$, we have $(\pi_0)_*\partial_{\omega^0}=\overrightarrow{u}\in\overrightarrow{T}M$ for $u=\pi_0(\phi)\in UM$. Along with \eqref{Fliftg}, the fact that $\mu^*g|_{\overrightarrow{T}M}=\hat{g}|_{\overrightarrow{T}M}$ then implies that the contact form \eqref{thetadef} pulls back to
\begin{equation}\label{omega0contact}
(\pi_0)^*\theta=\omega^0.
\end{equation}
Moreover, $\dd\omega^0$ vanishes separately on the following subbundles of $\ker\omega^0$, which map isomorphically to the summands of \eqref{Deltasplitdef},
\begin{align*}
&(\pi_0)_*\left(\R\left\{\partial_{\omega^i}\right\}_{i=1}^n\right)=\overrightarrow{\pi_0(\phi)^\bot},
&(\pi_0)_*\left(\R\left\{\partial_{\tilde{\gamma}^0_i}\right\}_{i=1}^n\right)=(\pi_0(\phi)^\bot)^\uparrow;
\end{align*}
name their direct sum $\boldsymbol{\Delta}\subset T\F$ so that $(\pi_0)_*\boldsymbol{\Delta}=\Delta$. We lift the almost-complex structure $J$ on $\Delta\subset TUM$ to $\boldsymbol{\Delta}$ to get $(\pi_0)^*\mathfrak{L}$-Lagrangian subbundles $\boldsymbol{\Lambda},\overline{\boldsymbol{\Lambda}}\subset\C\boldsymbol{\Delta}$ mapping isomorphically under $(\pi_0)_*$ to \eqref{UMLambda}, 
\begin{align*}
&\boldsymbol{\Lambda}=\C\left\{\partial_{\omega^i}-\im\partial_{\tilde{\gamma}^0_i}\right\},
&\overline{\boldsymbol{\Lambda}}=\C\left\{\partial_{\omega^i}+\im\partial_{\tilde{\gamma}^0_i}\right\}.
\end{align*}

The lifted almost-complex structure is encoded in 1-forms $\lambda^\mathtt{i}\in\Omega^1(\F,\C)$ given by
\begin{align}\label{Flambda}
&\lambda^0=\tfrac{1}{2}\omega^0,
&\lambda^i=\tfrac{1}{2}(\omega^i-\im\tilde{\gamma}^i_0).
\end{align}
With this coframing, \eqref{domegai} ($\mathtt{i}=0$) becomes
\begin{equation}\label{Fdlambda0}
\dd\lambda^0=\im\epsilon_{ij}\lambda^i\wedge\overline{\lambda}^j,
\end{equation}
and for $i\geq 1$,
\begin{equation}\label{Fdlambdai}
\begin{aligned}
\dd\lambda^i&=\tfrac{1}{2}\Big(
-\gamma^i_0\wedge\omega^0-\gamma^i_j\wedge\omega^j
+\im(\gamma^i_j\wedge\tilde{\gamma}^j_0-R^i_{0k0}\omega^k\wedge\omega^0-\tfrac{1}{2}R^i_{0kl}\omega^k\wedge\omega^l)\Big)\\
&=-(\gamma^i_0+\im R^i_{0k0}\omega^k)\wedge\lambda^0-\gamma^i_j\wedge\lambda^j
-\tfrac{\im}{4}R^i_{0kl}(\lambda^k+\overline{\lambda}^k)\wedge(\lambda^l+\overline{\lambda}^l).
\end{aligned}
\end{equation}
These are the structure equations \eqref{firstSE} for
\begin{align}\label{UMSEformstorsion}
&\xi^i=\gamma^i_0+\im R^i_{0k0}\omega^k,
&\left.\begin{array}{l}\xi_0\equiv 0\\\xi^i_j\equiv\gamma^i_j\end{array}\right\}\mod\{\im\Re\lambda^k\}
&&N^i_{jk}=-\tfrac{\im}{4}R^i_{0jk}.
\end{align}
\begin{rem}
Continuing the discussion of Remarks \ref{Nijenhuisrem} and \ref{Jintrem}, we now observe that the Nijenhuis tensor of the standard CR structure on $UM$ is given, up to scale, by \eqref{RNijen}. Note that this is trivial if $\dim M=2$ -- the one component of curvature having been absorbed into the definition \eqref{UMSEformstorsion} of $\xi^1$ -- reflecting the fact that 3-dimensional CR manifolds are automatically CR-integrable. To reproduce the theorem of \cite{BarDrag} that Riemannian space forms can give rise to integrable CR structures on $UM$, it is convenient to replace the Riemannian structure equations \eqref{dLeviCiv} with the ``model-mutated" version (see \cite[Ch.5 \S 6]{Sharpe}), 
\begin{align*}
\dd\gamma^\mathtt{i}_\mathtt{j}&=-\gamma^\mathtt{i}_\mathtt{k}\wedge\gamma^\mathtt{k}_\mathtt{j}\pm\omega^{\tt{i}}\wedge\omega^{\tt{j}}+\tfrac{1}{2}R^\mathtt{i}_{\mathtt{jkl}}\omega^\mathtt{k}\wedge\omega^\mathtt{l}, 
\end{align*}
whose curvature tensor measures deviation from the structure equations of space forms with nonzero sectional curvature. The resulting equations \eqref{Fdlambdai}, \eqref{UMSEformstorsion} on $UM$ would see the 1-forms $\xi^i$ replaced by $\gamma^i_0+\im(R^i_{0k0}\omega^k\mp\omega^i)$, and vanishing curvature would ensure CR integrability.
 
\end{rem}

The 1-forms \eqref{Flambda} may be considered a pull-back of the tautological form \eqref{lambdataut} along a section $UM\to\B^1_\Lambda$ of the bundle of coframings which are 1-adapted to \eqref{UMLambdasplit}. We realize the tautological forms \eqref{etatautlambda}, \eqref{Adiag} of $\mathcal{B}^2$ by setting
\begin{align*}
&\eta^0=\lambda^0,
&\eta^i=(1-\im r)\lambda^i-a\overline{\lambda}^i,
\end{align*}
which brings the structure equations \eqref{Fdlambda0}, \eqref{Fdlambdai} into agreement with \eqref{splitLGSE2} for 1-forms
\begin{equation}\label{RicSQSEforms}
\begin{aligned}
\varsigma&=-\im\dd r+\tfrac{1}{2}(a\dd\overline{a}-\overline{a}\dd a)+\tfrac{\im}{2}|z_-|^2\eta^0,\\
\kappa&=-\im a\dd r-(1-\im r)\dd a-\tfrac{\im}{2}(z_-)^2\eta^0-\tfrac{\im}{4}(z_+)^3R_{0k}\overline{\eta}^k,\\
\zeta^i&=\tfrac{\im}{2}|z_-|^2\eta^i-\tfrac{\im}{2}(z_-)^2\overline{\eta}^i-(z_-)\gamma^i_0-\im(z_+) R^i_{0k0}\omega^k,
\end{aligned}
\end{equation}
and torsion coefficients
\begin{align}\label{RicSQSEtorsion}
&O^i_{kl}=-\tfrac{\im}{4}z_+(\overline{z}_+)^2R^i_{0kl},
&P^i_{kl}=-\tfrac{\im}{2}(z_+)^2\overline{z}_+R^i_{0kl},
&&Q^i_{kl}=-\tfrac{\im}{4}(z_+)^3C^i_{0kl},
\end{align}
where we have introduced 
\begin{align}\label{RicWeyl}
&z_\pm=(1-\im r\pm a),
&R_{\mathtt{jk}}=\tfrac{1}{n+1}\epsilon^{\mathtt{m}}_{\mathtt{n}}R^{\mathtt{n}}_{\mathtt{jmk}},
&&C^{\mathtt{i}}_{\mathtt{jkl}}=R^{\mathtt{i}}_{\mathtt{jkl}}-\epsilon^{\mathtt{i}}_{\mathtt{k}}R_{\mathtt{jl}}.
\end{align}

\begin{rem}
The last term in the expression \eqref{RicSQSEforms} of $\kappa$ is exactly the Ricci-shift \eqref{Ricshift}, and distinguishes the torsion coefficients \eqref{RicSQSEtorsion} from those of the standard L-contact structure in that $Q$ is given by components the Weyl curvature tensor \eqref{RicWeyl} rather than the full Riemannian curvature tensor (like $O$ and $P$). One could also absorb the Ricci components of $P$ into $\kappa$, but the analogous effort to use $\varsigma$ to absorb the Ricci components of $O$ and/or $P$ fails to be maintain the identity for $\dd\eta^0$. 
\end{rem}

\begin{thm}\label{mainthm}
In order for the Ricci-shifted L-contact structure of $UM$ to be the leaf space of a 2-nondegenerate CR manifold, it is sufficient that $M$ is conformally flat and real-analytic. 
\end{thm}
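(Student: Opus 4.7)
The plan is to invoke the Sykes--Zelenko recovery procedure (\cite[Rem.~2.5, Prop.~2.6]{SykesZelenko}): in the analytic category, it constructs from an L-contact manifold a 2-nondegenerate CR manifold whose leaf space reproduces the given L-contact structure, provided the Nijenhuis-type integrability condition holds. In the structure equations \eqref{splitLGSE2} on $\B^2$, this condition is precisely the vanishing of the torsion coefficient $Q^i_{kl}$ of $\overline{\eta}^k\wedge\overline{\eta}^l$: its conjugate is the Nijenhuis tensor (in the sense of Remark \ref{Nijenhuisrem}) of the almost-CR structure $\Lambda\oplus\overline{\Lambda}$ cut out by each point of $\mathbb{L}$, and it is $Q\equiv 0$ that allows the family of partially-integrable splittings of $\C\Delta$ to integrate as the Levi foliation of a CR-integrable $\widetilde{\U}$ upstairs. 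The torsion coefficients $O^i_{kl}$ and $P^i_{kl}$, by contrast, record ambient curvature data that are compatible with the recovery and need not vanish.

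With this reduction, two steps remain. First, real-analyticity of $M$ propagates to the Sasaki metric $\hat g$, the Levi-Civita connection, the Riemann and Ricci tensors, and hence to the Ricci-shifted L-contact structure of Definition \ref{RicSQ}, so the analytic continuation required by Sykes--Zelenko is available. Second, we must show that $Q^i_{kl}$ vanishes identically on $\B^2$ when $M$ is conformally flat. From \eqref{RicSQSEtorsion} and \eqref{RicWeyl},
\[
Q^i_{kl}=-\tfrac{\im}{4}(z_+)^3\,C^i_{0kl},\qquad C^{\mathtt{i}}_{\mathtt{jkl}}=R^{\mathtt{i}}_{\mathtt{jkl}}-\epsilon^{\mathtt{i}}_{\mathtt{k}} R_{\mathtt{jl}},
\]
and since $z_+=1-\im r+a$ never vanishes, this reduces to verifying that $C^i_{0kl}\equiv 0$ whenever the Weyl tensor of $M$ is zero.

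The latter identity is the main obstacle: a direct tensorial check in the orthonormal coframe $\omega^{\mathtt{i}}$ of $\F$. Inserting the conformally-flat (Schouten) decomposition of $R^{\mathtt{a}}_{\mathtt{bcd}}$ into $C^i_{0kl}$ and using $\epsilon_{0k}=0$ for $k\geq 1$ collapses the cross terms, leaving a combination of the mixed Ricci components $R_{0l}$ that matches $\epsilon^i_k R_{0l}$ exactly by virtue of the $\tfrac{1}{n+1}$ normalization of $R_{\mathtt{jk}}$ in \eqref{RicWeyl}. The Ricci-shift of Definition \ref{RicSQ} was engineered for precisely this cancellation, absorbing the Ricci part of the torsion into the pseudo-connection $\kappa$ and leaving the Weyl-like tensor $C^i_{0kl}$ as the only Nijenhuis-type remainder. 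Once this pointwise identity is confirmed, $Q\equiv 0$ holds throughout $\B^2$, and the Sykes--Zelenko recovery produces the required 2-nondegenerate CR manifold whose leaf space is the Ricci-shifted L-contact structure on $UM$.
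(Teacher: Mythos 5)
Your proposal is correct in outline and follows essentially the same route as the paper: read off from \eqref{RicSQSEtorsion} that the Nijenhuis-type torsion $Q^i_{kl}$ of the structure equations \eqref{splitLGSE2} is a multiple of the Weyl-type components $C^i_{0kl}$ of \eqref{RicWeyl}, conclude $Q\equiv 0$ from conformal flatness, and invoke the Sykes--Zelenko recovery in the analytic category. Two points of comparison. First, what you call the ``main obstacle'' --- checking $C^i_{0kl}\equiv 0$ via the Schouten decomposition --- is not a separate step in the paper's proof: the identification of $Q$ with Weyl components is already the content of the computation producing \eqref{RicSQSEtorsion}--\eqref{RicWeyl}, the Ricci terms having been absorbed into $\kappa$ in \eqref{RicSQSEforms} by the Ricci shift, so the proof simply observes that conformal flatness kills $Q$; your version re-derives this, but only as an asserted sketch (``collapses the cross terms \dots matches exactly''), which as written is the shakiest link even though it verifies something the cited equations already encode. (Your observation that $z_+\neq 0$ --- since $|a|=r$ by \eqref{Adiag} forbids $a=-1+\im r$ --- is correct but is only needed for a converse, not for sufficiency.) Second, and this is the genuine omission: you fold the hypotheses of the recovery procedure entirely into ``the Nijenhuis-type integrability condition,'' citing \cite[Rem.~2.5, Prop.~2.6]{SykesZelenko}, whereas the recoverability criteria there concern regularity of the maps $\mathrm{ad}_{\LK}$ and are logically independent of the vanishing of $Q$. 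The paper discharges them by noting that the L-contact structure of $UM$ (standard or Ricci-shifted) arises from a split-quaternionic structure, hence is strongly regular in the sense of Definition \ref{Lcondef} and meets the criteria of \cite[Cor.~2.8]{SykesZelenko}; your argument should include this verification (via \cite[Cor.~4.6]{PZ} or the split-quaternionic origin directly), since without it the recovery step does not yet apply.
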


\begin{proof}
If $UM$ is the leaf space of a 2-nondegenerate CR structure $\U$, then the Nijenhuis tensor of $\U$ is represented by the torsion coefficients $Q$ in the structure equations \eqref{splitLGSE2}. For the Ricci-shifted L-contact structure equations \eqref{RicSQSEtorsion}, these torsion coefficients are components of the Weyl curvature tensor of $M$. Because the L-contact structure of $UM$ (standard or Ricci-shifted) arises from a split-quaternionic structure, it meets the criteria \cite[Cor. 2.8]{SykesZelenko} for recoverability, provided $M$ is real-analytic. 
\end{proof}

\vspace{\baselineskip}

\subsection{Quaternionic L-contact in Semi-Riemannian Signature $(p+1,p)$}\label{QLUM}

$(M,g)$ is semi-Riemannian, $\mathrm{sig}(g)=(p+1,p)$. With the same notation as \S\ref{ONFBsec}, 
\begin{align}\label{QDelta}
\boldsymbol{\Delta}=\R\left\{\partial_{\omega^i},\partial_{\omega^{p+i}},\partial_{\gamma^0_i},\partial_{\gamma^0_{p+i}}\right\}
\end{align}
has the $\mathfrak{L}$-Lagrangian splitting $\C\boldsymbol{\Delta}=\boldsymbol{\Lambda}\oplus\overline{\boldsymbol{\Lambda}}$ for
\begin{align*}
&\boldsymbol{\Lambda}=\C\left\{\partial_{\omega^i}-\im\partial_{\gamma^0_i}, \partial_{\omega^{p+i}}-\im\partial_{\gamma^0_{p+i}}\right\}. 
\end{align*}
The quaternionic structure $K:\boldsymbol{\Delta}\to\boldsymbol{\Delta}$ maps
\begin{align}\label{QKM}
K:\left\{\begin{array}{c}\partial_{\omega^i}\mapsto\partial_{\omega^{p+i}},\\ \\
\partial_{\gamma^0_i}\mapsto-\partial_{\gamma^0_{p+i}}.\end{array}\right.
\end{align}
The contact form still pulls back to $\F$ according to \eqref{omega0contact}, and the first semi-Riemannian structure equation \eqref{domegai} is
\begin{align}\label{Qdomega0}
\dd\omega^0&=-\gamma^0_j\wedge\omega^j+\gamma^0_{p+j}\wedge\omega^{p+j},
\end{align}
so we define the 1-adapted coframing as before,
\begin{align*}
&\lambda^0=\tfrac{1}{2}\omega^0,
&\lambda^i=\tfrac{1}{2}(\omega^i-\im\gamma^i_0),
&&\lambda^{p+i}=\tfrac{1}{2}(\omega^{p+i}-\im\gamma^{p+i}_0),
\end{align*}
whereby \eqref{Qdomega0} becomes \eqref{Qdlambda0} with $\xi_0=0$. The rest of the construction proceeds by analogy to the end of \S\ref{ONFBsec}, following \S\ref{QSEsec} rather than \S\ref{SQSEsec}. In particular, the definition \eqref{QSEeta} (with \eqref{Adiag}) of the tautological forms on $\B^2$ encodes the induced action of \eqref{QKM} on $\boldsymbol{\Lambda},\overline{\boldsymbol{\Lambda}}$. Details will be included in an updated pre-print.

\vspace{\baselineskip}

\appendix

\section{The Homogeneous Model of \S\ref{SQlinessec} as a Sphere Bundle}\label{appendixsec}

In this appendix, we exhibit a local hypersurface realization of the homogeneous models presented in \S\ref{SQlinessec} in an effort to provide a more analytical perspective on that discussion. Specifically, we offer an explicit local identification of that L-contact structure with the unit tangent bundle of a Riemannian manifold. Three caveats are in order. First, we only present the definite-signature case $\epsilon_{ij}=\delta_{ij}$ \eqref{epsilonmetric}, the mixed signature case being a straightforward modification of this. Second, we use representations of the bilinear and Hermitian forms that differ from \eqref{SQbh} in order to bring the local defining equations in \S\ref{FTsec} into more familiar form. Finally, the local appearance of this model depends on our non-canonical choices of these representations. To better explain this last caveat, we take a detour in \S\ref{Lieconsec} to compare L-contact geometry to a closely related field.

\vspace{\baselineskip}

\subsection{Lie Contact Geometry}\label{Lieconsec}

Strongly regular 2-nondegenerate CR manifolds (\cite[Thms 3.2, 5.1]{PZ}) and L-contact manifolds are generalizations -- in Cartan's sense of \emph{espaces g\'{e}n\'{e}raliz\'{e}s} \cite[Preface]{Sharpe} -- of the homogeneous models presented in \S\S\ref{SQlinessec}-\ref{Qlinessec}: coset spaces of $O(n+2,2)$ (in the definite signature case $\epsilon_{ij}=\delta_{ij}$) for the stabilizer subgroups \eqref{stabsubgps} preserving complex lines and 2-planes in $\C^{n+4}$. In this respect these two geometries are closely related to conformal and Lie contact geometry, respectively. 

Lie contact geometry (\cite[\S4.2.5]{CapSlovak}) is the generalization (in Cartan's sense, but also generalization to arbitrary signature) of Lie Sphere Geometry (\cite{LieSphereBook}), introduced by Sophus himself in his 1872 dissertation. The objects of interest in Lie sphere geometry are oriented $n$-spheres of arbitrary radius in $\R^{n+1}$ -- including limiting cases of points (zero radius) and oriented hyperplanes (infinite radius) -- which can be in ``signed" contact (oriented or not) with each other at some common point of tangency. Symmetries of interest map these objects to each other while preserving their contact relationships, so we are interested in rigid motions $O(n+1)$ of $\R^{n+1}$, Lorentz boosts and time-translations of $\R^{n+1,1}$, conformal symmetries $O(n+2,1)/\{\pm\mathbbm{1}\}$ of the conformal compactification $S^{n+1}$ of $\R^{n+1}$, and everything else generated by these (\cite[Thm 3.16]{LieSphereBook}, cf. \cite[\S2.2]{LaguerreSurf}). All of this is encoded in the $(n+2)$-dimensional Lie quadric $\mathcal{Q}\subset\R\PP^{n+3}$ whose symmetries $O(n+2,2)/\{\pm\mathbbm{1}\}$ preserve the real projectivization of the null cone in $\R^{n+2,2}$. Lie contact geometry generalizes the space of null projective lines on $\mathcal{Q}$, the Grassmannian $\mathsf{Gr}^0_2\R^{n+2,2}$ of totally null, real 2-planes.

The representation of $O(n+2,2)$ pertinent to conformal and Lie contact geometry differs from that of \S\ref{SQlinessec} in that it is totally real; in other words, it is the representation corresponding to the trivial split-quaternionic structure $\K=\mathbbm{1}$ (nevertheless, the significance of split-quaternionic structures to Lie contact geometry has been noticed -- see \cite[\S2.5]{LCquat}). Moreover, the stabilizer subgroups \eqref{stabsubgps} of \emph{real} lines and 2-planes are parabolic in this real representation, hence the Cartan geometries modeled on homogeneous coset spaces of $O(n+2,2)$ with respect to these stabilizer subgroups are parabolic geometries  -- Lorentzian-conformal geometry generalizing $\mathcal{Q}$ and Lie contact generalizing $\mathsf{Gr}^0_2\R^{n+2,2}$, respectively -- unlike 2-nondegenerate CR and L-contact geometries.    

One motivation for the present work was the prominent role of unit tangent bundles as examples of Lie contact manifolds. For a Riemannian manifold $(M,g)$, $UM$ \eqref{UMdef} has a natural Lie contact structure incorporating many of the same constructions as in \S\ref{UTBsec}. The ``flat" (homogeneous) model of Lie contact geometry -- i.e., $\mathsf{Gr}^0_2\R^{n+2,2}$ or the null lines on the Lie quadric $\mathcal{Q}$ -- is the unit tangent bundle of the Riemannian sphere $M=S^{n+1}$. However, it should noted that the Lie contact structure of $UM$ is only associated to the conformal class of the metric $g$ rather than $g$ itself. In other words, flatness of $UM$ as a Lie contact manifold is equivalent to $M$ being \emph{conformally} flat (\cite{LCconformal}), and any such $UM$ is locally equivalent to the homogeneous model. Similar considerations for L-contact manifolds should be kept in mind while reading \S\ref{FTsec}, which presents a local realization of a homogeneous L-contact manifold as a unit tangent bundle.

\vspace{\baselineskip}

\subsection{The Future Tube as a Unit Tangent Bundle}\label{FTsec}

The simplest examples of Levi-degenerate CR hypersurfaces which are not straightenable are tube conical surfaces $\U=C\times\im\R^{m+1}\subset\C^{m+1}$ where $C\subset\R^{m+1}$ is a cone $rC\subset C\ \forall r>0$ (\cite[\S5.2]{ChirkaCR}). The flat model of strongly regular, 2-nondegenerate CR geometry (where the nondegenerate part of the Levi form has definite signature) is the \emph{tube over the future light cone} $\U=\rho^{-1}(0)$ for $\rho:\C^{m+1}\to\R$ given by
\begin{align*}
2\rho=(z_1+\overline{z}_1)^2+\dots+(z_m+\overline{z}_m)^2-(z_{m+1}+\overline{z}_{m+1})^2.
\end{align*}
$\U$ is the tube over the \emph{future} light cone because we impose 
\begin{align}\label{futurecoord}
z_{m+1}+\overline{z}_{m+1}>0.
\end{align}
Note that if we take coordinates $(z_1,\dots,z_{m+1},z_{m+2})\in\C^{m+2}$ to lie in an affine subset of $\C\PP^{m+2}$,
\begin{align*}
[1:z_1:\dots:z_{m+1}:z_{m+2}]=[Z_0:Z_1:\dots:Z_{m+1}:Z_{m+2}],
\end{align*}
then $\{\rho=0\}=\PP(\N_{\boldsymbol{b}}\cap\N_{\boldsymbol{h}})$ where $\N_{\boldsymbol{b}},\N_{\boldsymbol{h}}\subset\C^{m+3}$ are the null cones of the bilinear and Hermitian forms
\begin{align*}
\boldsymbol{b}(Z,Z)&=-2Z_0Z_{m+2}+Z_1^2+\dots+Z_m^2-Z_{m+1}^2,\\
\boldsymbol{h}(Z,Z)&=Z_0\overline{Z}_{m+2}+\overline{Z}_0Z_{m+2}+|Z_1|^2+\dots+|Z_m|^2-|Z_{m+1}|^2.
\end{align*}

The Levi form of $\U$ is
\begin{align*}
\mathcal{L}=\partial\overline{\partial}\rho=\dd z_1\wedge\dd\overline{z}_1+\dots+\dd z_m\wedge\dd\overline{z}_m-\dd z_{m+1}\wedge\dd\overline{z}_{m+1}.
\end{align*}
Evidently, $\partial\rho(X)=0=\mathcal{L}(X,\overline{X})$ for 
\begin{align*}
X=(z_1+\overline{z}_1)\frac{\partial}{\partial z_1}+\dots+(z_m+\overline{z}_m)\frac{\partial}{\partial z_m}+(z_{m+1}+\overline{z}_{m+1})\frac{\partial}{\partial z_{m+1}},
\end{align*}
which is the infinitesimal generator of the scaling operator 
\begin{align}\label{scaleop}
&z\mapsto\delta_\R(z,z)z,
&\delta_\R(z,z)=x_1^2+\dots+x_{m+1}^2,
\end{align}
where we have broken the complex coordinates into their real an imaginary parts
\begin{align*}
z_i=x_i+\im y_i.
\end{align*}

Observe that if $(z_1,\dots,z_{m+1})\in\U$ and $c=r+\im s\in\C$ ($r,s\in\R$, $r>0$), then we also have
\begin{align}\label{conscale}
(cx_1+\im y_1,\dots,cx_{m+1}+\im y_{m+1})=(rx_1+\im(sx_1+y_1),\dots,rx_{m+1}+\im(sx_{m+1}+y_{m+1}))\in\U.
\end{align}
On the real subspace $\R^{m+1}\subset\C^{m+1}$ away from the origin, \eqref{conscale} is just a constant (rescaled) version of \eqref{scaleop}. Thus, the ``complex half rays" on $\U$ given by all such multiples \eqref{conscale} are the leaves of the Levi foliation of $\U$. We claim that the leaf space $U$ of $\U$ is the unit tangent bundle $UM$ for $(M,g)=\R^m$ with its Euclidean metric; i.e., $U=\R^{n+1}\times S^n$ for $n=m-1$.

To see this, let $t=(t_1,\dots,t_m)\in\R^{n+1}$ be coordinates for $M$ and $u=(u_1,\dots,u_m)\in S^n$ coordinates constrained by
\begin{align*}
g(u,u)=u_1^2+\dots+u_m^2=1.
\end{align*}
Define the inclusion $U\to\U$ by
\begin{align}\label{leafinclusion}
(t,u)\mapsto (u_1+\im t_1,\dots,u_m+\im t_m,1).
\end{align}
Note that we implicitly rely on the futuristic condition \eqref{futurecoord}. From here, it is straightforward to verify:
\begin{itemize}
\item distinct points of $UM$ map into distinct leaves in $\U$, 

\item every $(z_1,\dots,z_{m+1})\in\U$ lies in the image \eqref{leafinclusion} composed with \eqref{conscale} for some $c=r+\im s$ with $r>0$. 
\end{itemize} 
$\U$ is therefore realized as a complex ``half-ray" bundle over $U=U\R^{n+1}$ whose fiber over $u\in U$ is $\{cu\ |\ c\in\C,\ \Re c>0\}$.

\bibliographystyle{plain}

\bibliography{References}

\end{document}